\let\orgdescriptionlabel\descriptionlabel
\renewcommand*{\descriptionlabel}[1]{%
  \let\orglabel\label
  \let\label\@gobble
  \phantomsection
  \edef\@currentlabel{#1}%
  \let\label\orglabel
  \orgdescriptionlabel{#1}%
}
\newcommand{\abs}[1]{\left\lvert #1\right\rvert}
\newcommand{\OV}{\operatorname{\mathsf{V}}}
\newcommand{\OC}{\operatorname{\mathsf{C}}}
\newcommand{\OK}{\operatorname{\mathsf{K}}}
\newcommand{\OW}{\operatorname{\mathsf{W}}}
\newcommand{\OS}{\operatorname{\mathsf{S}}}
\newcommand{\calH}{\mathcal{H}}
\newcommand{\Vx}{\mathbf{x}}
\newcommand{\R}{\mathbb{R}}
\newcommand{\N}{\mathbb{N}}
\newcommand{\dual}[2]{\left\langle #1\,,\,#2\right\rangle}
\definecolor{orange}{rgb}{1,0.4,0}
\definecolor{green}{rgb}{0,0.4,0}
\newcommand{\OI}{\operatorname{\mathsf{I}}}
\newcommand{\norm}[2]{\left\lVert #1\right\rVert_{#2} }
\newtheorem{theorem}{Theorem}[section]
\newtheorem{lemma}[theorem]{Lemma}
\newtheorem{proposition}[theorem]{Proposition}
\newtheorem{remark}[theorem]{Remark}
\newtheorem{definition}[theorem]{Definition}
\title{A new approach to space-time boundary integral equations for the
  wave equation}
\date{}
\author{
Olaf~Steinbach\thanks{Institute of Applied Mathematics, TU Graz, 
  Austria}
\and Carolina~Urz\'ua-Torres\thanks{Delft Institute of Applied Mathematics,
  TU Delft, The Netherlands. \newline This publication arises from research funded 
  by the John Fell Oxford University Press Research Fund.}
  }
\numberwithin{equation}{section}
\begin{document}

\maketitle

\begin{abstract}
 We present a new approach for boundary integral equations for the wave equation 
 with zero initial conditions. Unlike previous attempts, our mathematical 
 formulation allows us to prove that the associated boundary integral operators 
 are continuous and satisfy inf-sup conditions in trace spaces of the same 
 regularity, which are closely related to standard energy spaces with the 
 expected regularity in  space and time. This feature is crucial from a 
 numerical perspective, as it  provides the foundations to derive sharper 
 error estimates and paves the way  to devise efficient adaptive space-time 
 boundary element methods, which will be tackled in future work. 
 On the other hand, the proposed approach is compatible with current time 
 dependent boundary element method's implementations and we predict that it 
 explains many of the behaviours observed in practice but that were not 
 understood with the existing theory.
\end{abstract}

\section{Introduction}
\label{sec:introduction}

Different strategies have been used to derive variational methods for time 
domain boundary integral equations for the wave equation. 
The more established and successful ones include weak formulations derived 
via the Laplace transform, and also space-time energetic variational 
formulations, often referred as \emph{energetic BEM} in the literature. 
These approaches started with the groundbreaking works of Bamberger and 
Ha Duong \cite{BHD86}, and Aimi et al. \cite{ADG08}, respectively. 
In spite of their extensive use \cite{ADG11, ADG09, BGN16, GNS17, GOS19,
  GOS20, GOS18, HAD03,  HQS17, JoR17,  PoS21, Say13, Say16}
at the time of writing this article, 
the numerical analysis corresponding to these formulations was still incomplete 
and presents difficulties that are hard to overcome, if possible at all. 

One of these difficulties is the fact that current approaches provide continuity 
and coercivity estimates which are not in the same space-time (Sobolev) norms. 
Indeed, there is a so-called \emph{norm gap} arising from a loss of regularity 
in time of the related boundary integral operators. Yet, recent work by Joly 
and Rodr\'iguez shows that these norm gaps are not present in 1D \cite{JoR17}. 
Moreover, to the best of the authors' knowledge, there is no proof nor numerical 
evidence that such loss of time regularity should hold for higher dimensions 
either. These two observations encouraged us to believe that one may be able to 
prove sharper results using different mathematical tools.
Another disadvantage of current strategies is that they do not provide the 
foundations for space-time boundary element methods, which are basically 
boundary element discretizations where the time variable is treated simply as 
another space variable, in contrast to techniques such as time-stepping methods 
and convolution quadrature methods \cite{Say16}.

Space-time discretization methods offer an increasingly popular alternative, 
since they allow the treatment of moving boundaries, adaptivity in space and 
time simultaneously, and space-time parallelization
\cite{GaN16,ScS09,Ste15,StY18}. 
However, in order to exploit these advantages, one needs to have a complete numerical 
analysis of the corresponding Galerkin methods.

We construct a new approach to boundary integral equations for the wave 
equation by working directly in the time domain. Furthermore, we develop a 
mathematical framework that not only overcomes the aforementioned 
difficulties, but also paves the way to stable space-time FEM/BEM coupling.
We present these new results following the standard pieces and arguments 
from classical boundary integral equations. We hope this highlights some 
mathematical intuitions behind the obtained results and makes the article 
easier to read for those familiarized with the boundary integral equation 
literature.
In addition to a new boundary integral equation formulation, we provide novel 
existence and uniqueness results for the Dirichlet and Neumann wave equation 
initial boundary value problems, when initial conditions are zero.

The structure of this article is as follows. 
Section~\ref{sec:preliminaries} introduces notation and summarizes results from 
the literature that will be needed later in the paper. We begin by using some 
key ideas of recent work on the wave equation in $H^1(Q)$ \cite{StZ20,Zan19}. 
Then, in Section~\ref{sec:Traces}, we introduce trace spaces, trace operators 
and their corresponding properties for three different families of spaces. With 
this we aim, on the one hand, to emphasize the link between the existing 
space-time (volumetric) variational formulations and our 
new results. On the other hand, we 
prove that the related trace spaces are indeed connected, which provides a new 
and deeper understanding of the different existing boundary integral formulations 
and their relation. Section~\ref{sec:BVPs} presents some required results on 
initial boundary value problems for the wave equation, while all the remaining 
building blocks of the new boundary integral equation formulation are presented 
in Section~\ref{sec:BIE}. 
This final section concludes with the existence and uniqueness 
results for solutions of related boundary integral equations.

\section{Preliminaries}
\label{sec:preliminaries}

\subsection{Model problem}
\label{ssec:modelprob}
Let $\Omega \subset \R^n$, $n=1,2,3$, with boundary $\Gamma :=\partial \Omega$.
We assume $\Omega$ to be an interval for $n=1$, or a bounded Lipschitz domain
for $n=2,3$. Let $0<T<\infty$. For a finite time interval $(0,T)$, we define
the \emph{space-time cylinder}
$Q := \Omega \times (0,T) \, \subset \, \R^{n+1}$,
and its lateral boundary $\Sigma := \Gamma \times [0,T]$. We also introduce 
the initial boundary $\Sigma_0 := \Omega \times \{0\}$, and the final boundary 
$\Sigma_T := \Omega \times \{T\}$.
We denote the D'Alembert operator by $\Box := \partial_{tt} - \Delta_x$,
and write the \emph{interior Dirichlet initial boundary value problem
for the wave equation} as
\begin{equation}\label{eq:IBVP}
\begin{array}{rclcl}
\Box u(x,t) & = & f(x,t) & & \text{for} \; (x,t) \in Q, \\
u(x,t) & = & g(x,t) & & \text{for} \; (x,t) \in \Sigma,\\
u(x,0) = \partial_t u(x,t)_{|t=0} & = & 0 & & \text{for} \; x \in \Omega.
\end{array}
\end{equation}

\subsection{Notation and mathematical framework}
\label{ssec:notation}
Let ${\mathcal{O}} \subseteq \R^m$, $m\in \N$.
We stick to the usual notation for the space $C^\infty({\mathcal{O}})$
of functions which are bounded and infinitely often continuously
differentiable; the subspace $C^\infty_0({\mathcal{O}})$ of
compactly supported smooth functions; the spaces $L^p({\mathcal{O}})$
of Lebesgue integrable functions; and the Sobolev spaces $H^s({\mathcal{O}})$. 
Moreover, inner products of Hilbert spaces $X$ are denoted by standard
brackets $(\cdot,\cdot)_X$, while angular brackets
$\dual{\cdot}{\cdot}_{\mathcal{O}}$ are used for the duality pairing
induced by the extension of the inner product
$(\cdot, \cdot)_{L^2({\mathcal{O}})}$. For a Hilbert space $X$ we denote
by $X'$ its dual with the norm
\[
  \| f \|_{X'} = \sup\limits_{0 \neq v \in X}
  \frac{|\langle f , v \rangle_{\mathcal{O}}|}{\| v \|_X} \quad
  \mbox{for} \; f \in X' .
\]
In particular, we will use
\[
  H^1({\mathcal{O}}) :=
  \overline{C^\infty({\mathcal{O}})}^{\| \cdot \|_{H^1({\mathcal{O}})}}, \quad
  H^1_0({\mathcal{O}}) :=
  \overline{C^\infty_0({\mathcal{O}})}^{\| \cdot \|_{H^1({\mathcal{O}})}},
\]
where
\[
  \| \phi \|_{H^1({\mathcal{O}})} := \left(
    \|\phi \|^2_{L^2({\mathcal{O}})} + \sum\limits_{i=1}^m
    \| \partial_{x_i} \phi \|^2_{L^2({\mathcal{O}})}
  \right)^{1/2} \, .
\]
In the specific case ${\mathcal{O}} = Q = \Omega \times (0,T) \subset
{\mathbb{R}}^{n+1}$ we identify $H^1(Q)$ with the Sobolev space
\[
H^{1,1}(Q) := L^2(0,T;H^1(\Omega)) \cap H^1(0,T;L^2(\Omega))
\]
using Bochner spaces, see, e.g.,
\cite[Sect.~1.3, Chapt.~1]{LIM72i} and \cite[Sect.~2, Chapt.~4]{LIM72ii}.
Furthermore, let
\begin{eqnarray*}
  H^1_{0,}(0,T;L^2(\Omega))
  & := & \Big \{
  v \in L^2(Q) : \partial_t v \in L^2(Q), \; v(x,0)=0 \quad \mbox{for} \;
  x \in \Omega \Big \}, \\
  H^1_{,0}(0,T;L^2(\Omega))
  & := & \Big \{
  v \in L^2(Q) : \partial_t v \in L^2(Q), \; v(x,T)=0 \quad \mbox{for} \;
  x \in \Omega \Big \} .
\end{eqnarray*}
With this we introduce
\begin{eqnarray*}
  H^{1,1}_{;0,}(Q)
  & := & L^2(0,T;H^1(\Omega)) \cap H^1_{0,}(0,T;L^2(\Omega)), \\
  H^{1,1}_{;,0}(Q)
  & := & L^2(0,T;H^1(\Omega)) \cap H^1_{,0}(0,T;L^2(\Omega)),
\end{eqnarray*}
with norms
\begin{eqnarray*}
  \| u \|_{H^{1,1}_{;0,}(Q)} := \sqrt{\| \partial_t u \|_{L^2(Q)}^2
         + \| \nabla_x u \|^2_{L^2(Q)}} \, , \\
  \| v \|_{H^{1,1}_{;,0}(Q)} := \sqrt{\| \partial_t v \|_{L^2(Q)}^2
         + \| \nabla_x v \|^2_{L^2(Q)}} \, .
\end{eqnarray*}
Note that the space $H^{1}_{;0,}(Q)$ corresponds to
$\vphantom{,}_{0}H^1(Q)$ as used in \cite{HAD03, LIM72i, LIM72ii}.
In the case of zero Dirichlet boundary data along the lateral boundary $\Sigma$
we define the subspaces
\begin{eqnarray*}
  H^{1,1}_{0;0,}(Q)
  & := & L^2(0,T;H^1_0(\Omega)) \cap H^1_{0,}(0,T;L^2(\Omega)), \\
  H^{1,1}_{0;,0}(Q)
  & := & L^2(0,T;H^1_0(\Omega)) \cap H^1_{,0}(0,T;L^2(\Omega)).
\end{eqnarray*}
We remark that $H^{1,1}_{;0,}(Q)$ and $H^{1,1}_{0;0,}(Q)$ prescribe zero initial
values at $t=0$, while $H^{1,1}_{;,0}(Q)$ and $H^{1,1}_{0;,0}(Q)$ have zero
final values at $t=T$. 

In this paper we will consider, as in
\cite{StZ21}, a generalized variational formulation to describe
solutions of the wave equation \eqref{eq:IBVP} also for
$f \in [H^{1,1}_{0;,0}(Q)]'$, instead of $f \in L^2(Q)$, as usually
considered, e.g., \cite{Lad85}. Therefore we introduce the \emph{extended}
space-time cylinder $Q_- := \Omega\times(-T,T)$. For $u \in L^2(Q)$, we define
$\widetilde{u} \in L^2(Q_-)$ as zero extension,
\[
  \widetilde{u}(x,t) := \left \{
    \begin{array}{ccl}
      u(x,t) & & \mbox{for} \; (x,t) \in Q, \\
      0 & & \mbox{for} \; (x,t) \in Q_- \backslash Q .
    \end{array} \right.
\]
The application of the wave operator $\Box$ to $\widetilde{u} \in L^2(Q_-)$
is defined as a distribution on $Q_-$, i.e., for all test functions
$\varphi \in C^\infty_0(Q_-)$, we define 
\[
  \langle \Box \widetilde{u} , \varphi \rangle_{Q_-} :=
  \int_{-T}^T \int_\Omega \widetilde{u}(x,t) \,
  \Box \varphi(x,t) \, dx \, dt =
  \int_0^T \int_\Omega u(x,t) \,
  \Box \varphi(x,t) \, dx \, dt \, .
\]
This motivates to consider the Sobolev space $H^1_0(Q_-)$ with the
norm
\[
  \| \phi \|_{H^1_0(Q_-)} =
  \sqrt{\| \partial_t \phi \|^2_{L^2(Q_-)} +
  \| \nabla_x \phi \|_{L^2(Q_-)}^2} \quad \mbox{for} \; \phi \in H^1_0(Q_-),
\]
the dual space $[H^1_0(Q_-)]'$, and the duality pairing
$\langle \cdot,\cdot \rangle_{Q_-}$ as extension of the inner product
in $L^2(Q_-)$. We also introduce the restriction operator
${\mathcal{R}} : H^1_0(Q_-) \to H^{1,1}_{0;,0}(Q)$, i.e.,
${\mathcal{R}} \phi := \phi_{|Q}$, and its adjoint
${\mathcal{R}}' : [H^{1,1}_{0;,0}(Q)]' \to [H^1_0(Q_-)]'$.
Moreover, let ${\mathcal{E}} : H^{1,1}_{0;,0}(Q) \to H^1_0(Q_-)$
be any continuous and injective extension operator with norm
\[
  \| {\mathcal{E}} \|_{H^{1,1}_{0;,0}(Q),H^1_0(Q_-)} :=
  \sup\limits_{0 \neq v \in H^{1,1}_{0;,0}(Q)}
  \frac{\| {\mathcal{E}}v \|_{H^1_0(Q_-)}}{\| v \|_{H^{1,1}_{0;,0}(Q)}} \, ,
\]
and its adjoint
${\mathcal{E}}' : [H^1_0(Q_-)]' \to [H^{1,1}_{0;,0}(Q)]'$, satisfying
${\mathcal{R}}{\mathcal{E}} \phi = \phi$ for all $\phi \in H^{1,1}_{0;,0}(Q)$.

As in \cite{StZ21} we introduce the Banach space
\[
  {\mathcal{H}}(Q) := \Big \{
  u = \widetilde{u}_{|Q} : \widetilde{u} \in L^2(Q_-), \;
  \widetilde{u}_{|\Omega \times (-T,0)} = 0, \;
  \Box \widetilde{u} \in [H^1_0(Q_-)]' \Big \},
\]
with the norm
$
  \| u \|_{{\mathcal{H}}(Q)} :=
  \sqrt{ \| u \|_{L^2(Q)}^2 + \| \Box \widetilde{u} \|^2_{[H^1_0(Q_-)]'}} \, ,
$
where 
\[\| \Box \widetilde{u} \|_{[H^1_0(Q_-)]'} =
  \sup\limits_{0 \neq v \in H^{1,1}_{0;,0}(Q)}
  \frac{|\langle \Box \widetilde{u} , {\mathcal{E}}v\rangle_{Q_-}|}
  {\| v \|_{H^{1,1}_{0;,0}(Q)}} \,.
  \]
By completion, we finally define the Hilbert spaces
\[
  {\mathcal{H}}_{0;0,}(Q) :=
  \overline{H^{1,1}_{0;0,}(Q)}^{\| \cdot \|_{{\mathcal{H}}(Q)}} \subset
  {\mathcal{H}}_{;0,}(Q) :=
  \overline{H^{1,1}_{;0,}(Q)}^{\| \cdot \|_{{\mathcal{H}}(Q)}} \subset
  {\mathcal{H}}(Q),
\]
e.g.,
\[
  {\mathcal{H}}_{;0,}(Q) = \Big \{
  u \in {\mathcal{H}}(Q) : \exists (u_n)_{n \in {\mathbb{N}}} \subset
  H^{1,1}_{;0,}(Q) \; \mbox{with} \; \lim\limits_{n \to \infty}
  \| u - u_n \|_{{\mathcal{H}}(Q)} = 0 \Big \} .
\]
Note that $H^{1,1}_{0;0,}(Q) \subset {\mathcal{H}}_{0;0,}(Q)$ and
$H^{1,1}_{;0,}(Q) \subset {\mathcal{H}}_{;0,}(Q)$, see
\cite[Lemma 3.5]{StZ21} for the first inclusion.

\subsection{Transformation operator $\calH_T$}
\label{ssec:HilbertTh}

For $u \in L^2(0,T)$ we consider the Fourier series
\begin{align*}
 u(t) = \sum_{k=0}^\infty u_k \sin\left(\left(\frac{\pi}
 {2} + k\pi\right)\frac{t}{T}\right), \quad 
  u_k = \frac{2}{T}\int_0^T
  u(t)\sin\left(\left(\frac{\pi}{2} + k\pi\right)\frac{t}{T} \right) \, dt .
\end{align*}
As in \cite{StZ20} we introduce the transformation operator $\calH_T$ as
\begin{align*}
\calH_T u(t) := \sum_{k=0}^\infty u_k \cos\left(\left(
\frac{\pi}{2} + k\pi\right)\frac{t}{T}\right),
\end{align*}
and it's inverse, i.e., for $v \in L^2(0,T)$,
\begin{align*}
  \calH_T^{-1} v(t) := \sum_{k=0}^\infty
  \overline{v}_k \sin\left(
  \left(\frac{\pi}{2} + k\pi\right)\frac{t}{T}\right), \quad
\overline{v}_k = \frac{2}{T}\int_0^T
  v(t)\cos\left(\left(\frac{\pi}{2} + k\pi\right)\frac{t}{T} \right) \, dt .
\end{align*}
By construction we have
$\calH_T : \, H^1_{0,}(0,T) \to H^1_{,0}(0,T)$, and
$\calH_T^{-1} : \, H^1_{,0}(0,T) \to H^1_{0,}(0,T)$.
In the following, we summarize some additional properties fulfilled by the 
operators $\calH_T$ and $\calH_T^{-1}$, see \cite{StZ20,Zan19}.

\pagebreak

\begin{proposition}\label{proposition Hilbert}
\hspace*{1cm}  
\begin{enumerate}
\item For any $u,v \in L^2(0,T)$
\begin{align*}
  \langle \calH_T u , v \rangle_{L^2(0,T)} =
  \langle u , \calH_T^{-1} v \rangle_{L^2(0,T)}.
\end{align*}
\item For all $u \in H^1_{0,}(0,T)$ 
\begin{align*}
\partial_t \calH_T u = - \calH_T^{-1} \partial_t u.
\end{align*}
\item $\calH_T$ and $\calH_T^{-1}$ are norm preserving, i.e.,
\begin{align*}
  \| \calH_T w \|_{L^2(0,T)} = \| w \|_{L^2(0,T)},  \quad
  \| \calH_T^{-1} w \|_{L^2(0,T)} = \| w \|_{L^2(0,T)} \quad
  \forall w \in L^2(0,T).
\end{align*}
\item For all $w \in L^2(Q)$
\begin{align*}
 \langle w , \calH_T w \rangle_{L^2(0,T)} \geq 0.
\end{align*}
\end{enumerate}
\end{proposition}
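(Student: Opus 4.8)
The plan is to reduce every assertion to the spectral picture provided by the two families $\{\sin(\lambda_k t/T)\}_{k\ge 0}$ and $\{\cos(\lambda_k t/T)\}_{k\ge 0}$ with $\lambda_k:=\tfrac{\pi}{2}+k\pi$. The starting observation is that both families are orthogonal bases of $L^2(0,T)$ with the \emph{same} normalisation $\int_0^T \sin^2(\lambda_k t/T)\,dt=\int_0^T \cos^2(\lambda_k t/T)\,dt=T/2$; this drops out of the product-to-sum identities together with $\lambda_k+\lambda_j=(k+j+1)\pi$ and $\lambda_k-\lambda_j=(k-j)\pi$, both integer multiples of $\pi$, which kill all off-diagonal integrals. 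By construction $\calH_T$ merely transports the sine coefficients $u_k$ of $u$ into the cosine expansion while $\calH_T^{-1}$ reverses this, so the bulk of the work is coefficient bookkeeping.

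With this in hand, property 3 (norm preservation) is immediate from Parseval: if $w=\sum_k w_k\sin(\lambda_k t/T)$ then $\|w\|_{L^2(0,T)}^2=\tfrac{T}{2}\sum_k w_k^2=\|\calH_T w\|_{L^2(0,T)}^2$, and symmetrically for $\calH_T^{-1}$. For property 1 (the adjoint relation) I would expand both pairings: writing $\bar v_k$ for the cosine coefficients of $v$, a direct computation gives $\langle \calH_T u,v\rangle_{L^2(0,T)}=\tfrac{T}{2}\sum_k u_k\bar v_k=\langle u,\calH_T^{-1}v\rangle_{L^2(0,T)}$, so both reduce to the same $\ell^2$ pairing of coefficient sequences.

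For property 2 (the derivative rule) the clean route is integration by parts rather than formal term-by-term differentiation. For $u\in H^1_{0,}(0,T)$ the $k$-th cosine coefficient of $\partial_t u$ is $\tfrac{2}{T}\int_0^T \partial_t u\,\cos(\lambda_k t/T)\,dt$; integrating by parts produces the boundary term $\tfrac{2}{T}[u\cos(\lambda_k t/T)]_0^T$, which vanishes \emph{exactly} because $\cos\lambda_k=0$ at $t=T$ and $u(0)=0$ at $t=0$, leaving $\tfrac{\lambda_k}{T}u_k$. Feeding this into the definition of $\calH_T^{-1}$ and comparing with $\partial_t\calH_T u=-\sum_k u_k\tfrac{\lambda_k}{T}\sin(\lambda_k t/T)$ gives the claimed sign. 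I want to stress that the compatibility of the boundary condition $u(0)=0$ with the specific frequencies $\lambda_k=\tfrac{\pi}{2}+k\pi$ is precisely what makes the identity hold.

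The only genuinely non-trivial part is property 4 (positivity). I would first reduce to finite trigonometric sums (dense in $L^2(0,T)$) and pass to the limit using the continuity already supplied by property 3. For such a sum, $\langle w,\calH_T w\rangle_{L^2(0,T)}=\sum_{k,j}w_k w_j B_{kj}$ with $B_{kj}=\int_0^T\sin(\lambda_k t/T)\cos(\lambda_j t/T)\,dt$; since the quadratic form only sees the symmetric part, the key computation is $B_{kj}+B_{jk}=\int_0^T\sin((\lambda_k+\lambda_j)t/T)\,dt=\tfrac{T}{(k+j+1)\pi}\big(1+(-1)^{k+j}\big)$. This vanishes when $k+j$ is odd, so the form decouples into an even-index block and an odd-index block, each of Cauchy type $\tfrac{T}{\pi}(a_p+a_q)^{-1}$ with $a_p=2p+\tfrac12>0$ (respectively $a_p=2p+\tfrac32$). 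Positivity of such a block then follows from the Gram representation $(a_p+a_q)^{-1}=\int_0^\infty e^{-a_p s}e^{-a_q s}\,ds$, which rewrites each block as $\int_0^\infty\big(\sum_p w_{2p}e^{-a_p s}\big)^2 ds\ge 0$. Integrating this one-dimensional inequality over $\Omega$ yields the statement for $w\in L^2(Q)$. The main obstacle is exactly spotting this hidden Cauchy/Gram structure after the parity reduction; everything else is routine Fourier bookkeeping.
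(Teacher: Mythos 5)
Your proposal is correct. Note that the paper itself gives no proof of this proposition: it is explicitly presented as a summary of properties established in the cited references \cite{StZ20,Zan19}, so there is no in-paper argument to compare against. Your self-contained derivation matches the standard one from those references: items 1--3 are the coefficient bookkeeping you describe (and your observation that the boundary term in the integration by parts vanishes precisely because $u(0)=0$ and $\cos(\tfrac{\pi}{2}+k\pi)=0$ is exactly the point), while for item 4 your parity splitting followed by the Gram representation $(a_p+a_q)^{-1}=\int_0^\infty e^{-a_ps}e^{-a_qs}\,ds$ is equivalent, via the substitution $x=e^{-s}$, to the device $\frac{1}{k+\ell+1}=\int_0^1 x^{k+\ell}\,dx$ used by Steinbach and Zank to exhibit the quadratic form as an integral of squares. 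The only points worth tightening are cosmetic: a word on completeness of the two trigonometric systems (they are Sturm--Liouville eigenbases for the boundary conditions $u(0)=u'(T)=0$, resp.\ $u'(0)=u(T)=0$), and a justification of the term-by-term differentiation in item 2, which follows since $\sum_k\lambda_k^2u_k^2<\infty$ for $u\in H^1_{0,}(0,T)$ so the differentiated series converges in $L^2(0,T)$.
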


\noindent
We conclude this subsection by extending the modified Hilbert transformation
${\mathcal{H}}_T$ to our functional spaces. For $u \in L^2(Q)$ we first
have the decomposition
\begin{align*}
 u(x,t) = \sum_{k=0}^\infty\sum_{i=0}^\infty u_{i,k} \sin\left(\left(\frac{\pi}
 {2} + k\pi\right)\frac{t}{T}\right) \, \varphi_i(x), \\
  u_{i,k} = \frac{2}{T}\int_Q u(x,t) \,
  \sin\left(\left(\frac{\pi}{2} + k\pi\right)\frac{t}{T} \right) \, dt
  \, \varphi_i(x) \, dx ,
\end{align*}
where $\varphi_i$ are the Neumann eigenfunctions of the Laplacian, i.e.,
\begin{align*}
-\Delta \varphi_i = \lambda_i \varphi_i \text{ in }\Omega, \quad
\partial_{n_x}\varphi_i = 0 \quad\text{ on }\Gamma, \quad
\norm{\varphi_i}{L^2(\Omega)} = 1, \quad 0 = \lambda_0 < \lambda_i \quad
\forall i \in \N .
\end{align*}
They are an orthonormal basis in $L^2(\Omega)$ and an orthogonal basis in
$H^1(\Omega)$, e.g., \cite[Chapt.~2]{Lad85}. With this we define
\begin{align*}
  \calH_T u(x,t) :=
  \sum_{k=0}^\infty\sum_{i=0}^\infty u_{i,k} \cos\left(\left(
  \frac{\pi}{2} + k\pi\right)\frac{t}{T}\right)\varphi_i(x), \quad
  (x,t) \in Q,
\end{align*}
with ${\calH}_T: H^{1,1}_{;0,}(Q) \to H^{1,1}_{;,0}(Q)$.
Analogously,
${\calH}_T^{-1} : H^{1,1}_{;,0}(Q) \to H^{1,1}_{;0,}(Q)$. 

\begin{remark}
The time-reversal map $\kappa_T$, defined as \cite[Eq.~(2.36)]{COS90} 
\begin{equation}\label{eq:kt}
 \kappa_T \,w (x,t) := w(x,T-t) \quad \mbox{for} \; (x,t) \in Q ,
 w \in H^1(Q),
\end{equation}
is often used instead of the transformation operator 
${\calH}_T: H^{1,1}_{;0,}(Q) \to H^{1,1}_{;,0}(Q)$.
\end{remark}

\subsection{Fundamental solution and retarded potentials}
Let us briefly present the \emph{boundary layer potentials} for the wave 
equation, often called \emph{retarded potentials}. We refer the reader to 
\cite{COS94} and \cite{HAD03} for further details. 
First, we introduce the fundamental solution of the wave equation,
\begin{align}\label{eq:fundsol}
 G(x,t) =\begin{cases}
 \dfrac{1}{2} \, \mathsf{H}(t-\abs{x}), & \quad n=1,\\[0.2in]
 \dfrac{1}{2\pi} \, \dfrac{\mathsf{H}(t-\abs{x})}{\sqrt{t^2-\abs{x}^2}},
 & \quad n=2,\\[0.3in]
  \dfrac{1}{4\pi} \, \dfrac{\delta(t - \abs{x})}{\abs{x}},& \quad n=3,
         \end{cases}
\end{align}
with $\delta$ the Dirac distribution, and $\mathsf{H}$
the Heaviside step function.
Let $\mathscr{S}$ be the \emph{single layer potential} and $\mathscr{D}$ the 
\emph{double layer potential}, i.e., for $(x,t) \in Q$ and
regular enough densities $w$ and $z$, respectively,
\begin{align}\label{eq:SL}
 (\mathscr{S}w)(x,t) &:= \int_0^t \int_\Gamma G(x-y,t-\tau)
 \, w(y,\tau) \, ds_y \, d\tau,\\ \label{eq:DL}
  (\mathscr{D}z)(x,t) &:= \int_0^t \int_\Gamma
 \partial_{n_y}  G(x-y,t-\tau) \, z(y,\tau) \, ds_y \, d\tau.
\end{align}
Concretely, for $n=3$, these are
\begin{align}
 (\mathscr{S}w)(x,t) &:= \frac{1}{4\pi} \int_\Gamma \frac{w(y,
 t-\abs{x-y})}{\abs{x-y}} ds_y , \\
 (\mathscr{D}z)(x,t) &:= \frac{1}{4\pi} \int_\Gamma \left[ \partial_{n_y} 
 \frac{z(y,t-\abs{x-y})}{\abs{x-y}}  - \frac{\partial_{n_y}\abs{x-y}}{\abs{
 x-y}}\partial_t z(y,t-\abs{x-y}) \right] ds_y.
\end{align}
The fact that the time argument is the retarded time $\tau = t-\abs{x-y}$ 
motivates that $\mathscr{S}$ and $\mathscr{D}$ are usually called retarded 
potentials.

\section{Green's Formula, Trace Spaces and Trace Operators}
\label{sec:Traces}
We introduce the lateral interior trace operator
$\gamma_\Sigma^i : u \mapsto u_{|\Sigma}$
as continuous extension of the trace map defined in the pointwise sense for
smooth functions. 
As in \cite[Lemma 4.1]{McL00} we can write a space-time Green's
formula for $ \varphi \in C^2(Q)$ and $\psi \in C^1(Q)$ as
\[
  \Phi(\varphi,\psi) =
  \int_0^T \int_\Omega \Box \varphi \, \psi \, dx \, dt
  + \int_0^T \int_\Gamma \partial_{n_x} \varphi \,
  \gamma_\Sigma^i \psi \, ds_x \, dt -
  \int_\Omega \Big[ \partial_t \varphi \, \psi \Big]_{t=0}^T \, dx,
\]
where
\begin{equation}\label{Def Phi}
  \Phi(\varphi,\psi) :=
  - \int_0^T \int_\Omega \partial_t \varphi \, \partial_t \psi \, dx \, dt +
  \int_0^T \int_\Omega \nabla_x \varphi \cdot \nabla_x \psi \, dx \, dt \, .
\end{equation}
In particular, for $ \varphi \in C^2(Q)$ with $\partial_t \varphi(x,t)_{|t=0}=0$
for $x \in \Omega$ and for $ \psi \in C^1(Q)$ with $\psi(x,T)=0$ for
$x \in \Omega$, this gives Green's first formula
\begin{equation}\label{Green 1 smooth}
\Phi(\varphi,\psi) =
  \int_0^T \int_\Omega \Box \varphi \, \psi \, dx \, dt
  + \int_0^T \int_\Gamma \partial_{n_x} \varphi \,
  \gamma_\Sigma^i \psi \, ds_x \, dt .
\end{equation}

\subsection{Traces on $H^{1,1}_{;0,}(Q)$, $H^{1,1}_{;,0}(Q)$, and
${\mathcal{H}}_{;0,}(Q)$}
\label{ssec:TracesQ}

Following \cite[Theorem~2.1, Chapt.~4 and p.~19]{LIM72ii} we get that the
interior trace map $\gamma_{\Sigma}^i$ is continuous and surjective
from $H^1(Q)$ to $H^{1/2}(\Sigma)$. In addition, let
${\mathcal{E}}_\Sigma : H^{1/2}(\Sigma) \to H^1(Q)$ be a continuous
right inverse.

Let us introduce the spaces
\begin{align*}
  H^{1/2}_{0,}(\Sigma) := L^2(0,T;H^{1/2}(\Gamma))\cap
  H^{1/2}_{0,}(0,T;L^2(\Gamma)),\\
  H^{1/2}_{,0}(\Sigma) := L^2(0,T;H^{1/2}(\Gamma))\cap
  H^{1/2}_{,0}(0,T;L^2(\Gamma)),
\end{align*}
with $H^{1/2}_{0,}(0,T;L^2(\Gamma))$ and
$H^{1/2}_{,0}(0,T;L^2(\Gamma))$ defined 
by interpolation as
\begin{align*}
  H^{1/2}_{0,}(0,T;L^2(\Gamma)) :=
  [H^1_{0,}(0,T;L^2(\Gamma), L^2(0,T;L^2(\Gamma)]_{1/2},\\
  H^{1/2}_{,0}(0,T;L^2(\Gamma)) :=
  [H^1_{,0}(0,T;L^2(\Gamma), L^2(0,T;L^2(\Gamma)]_{1/2}.
\end{align*}
Then, we have the following result, which is stated in \cite{HAD03} 
without a proof. Here we provide one for completeness.

\begin{lemma}\label{lemma:traceH10I}
  The interior trace map $\gamma_{\Sigma}^i$ is continuous and surjective from
  $H^{1,1}_{;0,}(Q)$ to $H^{1/2}_{0,}(\Sigma)$.
\end{lemma}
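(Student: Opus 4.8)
The plan is to reduce everything to the already-cited cylinder trace theorem $\gamma_\Sigma^i:H^1(Q)\to H^{1/2}(\Sigma)$ by exploiting the zero initial condition to pass to the symmetric cylinder $Q_-=\Omega\times(-T,T)$, on which the same theorem (and a continuous right inverse) holds verbatim. The bookkeeping of the subscript ``$0,$'' is then encoded in the Lions--Magenes characterization of the critical-index space $H^{1/2}_{0,}$ via extension across $t=0$. Throughout I use the identification $H^{1/2}(\Sigma)=L^2(0,T;H^{1/2}(\Gamma))\cap H^{1/2}(0,T;L^2(\Gamma))$, valid on product manifolds for $0\le s\le 1$, and its analogue on $\Sigma_-:=\Gamma\times(-T,T)$.

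For continuity, given $u\in H^{1,1}_{;0,}(Q)$ I would first observe that, since $u\in H^1(0,T;L^2(\Omega))\hookrightarrow C([0,T];L^2(\Omega))$ with $u(\cdot,0)=0$, the zero extension $\widetilde u$ introduced above actually lies in $H^1(Q_-)$, with $\|\widetilde u\|_{H^1(Q_-)}\le C_T\|u\|_{H^{1,1}_{;0,}(Q)}$; the vanishing initial trace is exactly what prevents a distributional jump of $\partial_t\widetilde u$ across $t=0$, and a Poincaré inequality in time (again using $u(\cdot,0)=0$) controls the missing $L^2(Q)$ term. The cited trace theorem on $Q_-$ then gives $\gamma_{\Sigma_-}^i\widetilde u\in H^{1/2}(\Sigma_-)$ with the corresponding bound. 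Because $\widetilde u\equiv 0$ on $\Omega\times(-T,0)$, this lateral trace is the extension by zero of $\gamma_\Sigma^i u$ across $t=0$, and membership of that zero extension in $H^{1/2}(\Sigma_-)$ is, by the characterization of $H^{1/2}_{0,}$ at the critical index $1/2$, equivalent to $\gamma_\Sigma^i u\in L^2(0,T;H^{1/2}(\Gamma))\cap H^{1/2}_{0,}(0,T;L^2(\Gamma))=H^{1/2}_{0,}(\Sigma)$, with equivalent norms.

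For surjectivity I would run the construction in reverse but symmetrize to recover the initial condition. Given $g\in H^{1/2}_{0,}(\Sigma)$, the same characterization shows that its odd extension $g^{\mathrm{odd}}$ to $\Sigma_-$, defined by $g^{\mathrm{odd}}(\cdot,t)=-g(\cdot,-t)$ for $t<0$, belongs to $H^{1/2}(\Sigma_-)$. Lifting with the right inverse on the symmetric cylinder (which exists by the same theorem applied on $Q_-$), $w:=\mathcal{E}_{\Sigma_-}g^{\mathrm{odd}}\in H^1(Q_-)$ satisfies $\gamma_{\Sigma_-}^i w=g^{\mathrm{odd}}$; I then replace $w$ by its temporal odd part $w^{\mathrm{odd}}(x,t):=\tfrac12\bigl(w(x,t)-w(x,-t)\bigr)$. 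This still lies in $H^1(Q_-)$ with controlled norm and still has lateral trace $g^{\mathrm{odd}}$ (since $g^{\mathrm{odd}}$ is odd); being odd in time and in $H^1(-T,T;L^2(\Omega))\hookrightarrow C([-T,T];L^2(\Omega))$, it satisfies $w^{\mathrm{odd}}(\cdot,0)=0$. Setting $u:=w^{\mathrm{odd}}|_Q$ gives $u\in H^{1,1}(Q)$ with $u(\cdot,0)=0$, hence $u\in H^{1,1}_{;0,}(Q)$, and $\gamma_\Sigma^i u=g^{\mathrm{odd}}|_{(0,T)}=g$, with $\|u\|_{H^{1,1}_{;0,}(Q)}\lesssim\|g\|_{H^{1/2}_{0,}(\Sigma)}$.

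The main obstacle is the critical regularity index $s=1/2$: this is precisely where the map $w\mapsto w(\cdot,0)$ ceases to be continuous, so the condition ``$0,$'' cannot be read off pointwise and must be handled through the extension-by-zero (equivalently, odd-extension) characterization of the Lions--Magenes space $H^{1/2}_{0,}$. I would therefore isolate, as the one genuinely delicate point, the equivalence between $g\in H^{1/2}_{0,}(\Sigma)$ and membership of its zero or odd extension in $H^{1/2}(\Sigma_-)$ (the spatial $H^{1/2}(\Gamma)$ factor is inert here, the issue living entirely in the $L^2(\Gamma)$-valued time variable), for which I would cite \cite{LIM72i,LIM72ii}; the remaining steps reduce to the already-cited cylinder trace theorem and elementary reflection arguments.
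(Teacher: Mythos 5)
Your argument is correct, but it follows a genuinely different route from the one in the paper. The paper adapts the Lions--Magenes half-space proof directly: it localizes to $\Omega=\{x_n>0\}$, views $u$ as an $L^2$ function of the normal variable $x_n$ with values in $X=L^2(0,T;H^1)\cap H^1_{0,}(0,T;L^2)$ and $Y=L^2(0,T;L^2)$, applies the abstract trace theorem to land in $[X,Y]_{1/2}$, and then uses the interpolation-of-intersections theorem to identify $[X,Y]_{1/2}$ with $L^2(0,T;H^{1/2}(\Gamma))\cap H^{1/2}_{0,}(0,T;L^2(\Gamma))$; surjectivity comes from the same abstract theorem. You instead reflect in \emph{time}: zero-extend $u$ to $Q_-=\Omega\times(-T,T)$ (legitimate, since $u(\cdot,0)=0$ prevents a distributional jump of $\partial_t\widetilde u$), apply the already-cited trace theorem on $Q_-$ as a black box, and translate back via the characterization of the critical interpolation space $H^{1/2}_{0,}=[H^1_{0,},L^2]_{1/2}$ as the functions whose zero (equivalently odd) extension across $t=0$ lies in $H^{1/2}$; surjectivity is the reverse construction with an odd reflection and a symmetrization of the lifting. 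The two proofs lean on the same underlying machinery but package it differently: the paper's version produces the interpolation space $H^{1/2}_{0,}$ directly from the definition used in Section~\ref{sec:Traces}, at the cost of redoing the half-space trace argument, while yours reuses the cylinder trace theorem verbatim but must import, as an extra lemma, the extension-by-zero characterization of $[H^1_{0,}(0,T),L^2(0,T)]_{1/2}$ at the critical index $s=1/2$ (the one-sided analogue of the $H^{1/2}_{00}$ theorem in Lions--Magenes, in its $L^2(\Gamma)$-valued form). You correctly identify this as the one delicate point --- it is exactly where $H^{1/2}_{0,}$ differs from $H^{1/2}$ and where a naive ``read off $g(\cdot,0)=0$'' argument would fail --- and you also correctly note that the missing $L^2(Q)$ term in the seminorm-type norm of $H^{1,1}_{;0,}(Q)$ is recovered by a Poincar\'e inequality in time. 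Provided that characterization is cited precisely, your proof is complete.
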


\begin{proof}
We adapt the proof of \cite[Theorem~2.1, Chapt.~4]{LIM72ii} 
to $H^{1,1}_{;0,}(Q)$ (instead of $H^1(Q)$). Recall that 
\begin{equation*}
  u \in H^{1,1}_{;0,}(Q) =
  L^2(0,T; H^1(\Omega))\cap H^1_{0,}(0,T;L^2(\Omega)). 
\end{equation*}
Without loss of generality, we can take
$\Omega = \{ x \in \R^n \, : \, x_n>0 \}$ and
$\Gamma = \{ x \in \R^n \, : \, x_n=0 \}$. Then,
by using the notation
$ x = \lbrace x^\prime, x_n \rbrace$, with
$x^\prime = \lbrace x_1, \ldots , x_{n-1} \rbrace$, we can write:
\begin{align*}
  u \in H^{1}_{;0,}(Q) \; \Leftrightarrow \;
  & u \in L^2(\R_{+,x_n}; L^2(0,T;H^1(\R^{n-1}_{x^\prime})) 
  \cap L^2(0,T;L^2(\R^{n-1}_{x^\prime})), \\
  & u \in L^2(\R_{+,x_n}; L^2(0,T;L^2(\R^{n-1}_{x^\prime})) 
  \cap H^1_{0,}(0,T;L^2(\R^{n-1}_{x^\prime})).
\end{align*}
Then, we can apply Theorem~4.2 from \cite[Chapt.~1]{LIM72i} with
\begin{align*}
  X = L^2(0,T; H^1(\R^{n-1}_{x^\prime})) \cap
  H^1_{0,}(0,T;L^2(\R^{n-1}_{x^\prime})),\quad
  Y = L^2(0,T; L^2(\R^{n-1}_{x^\prime})), 
\end{align*}
to get that $u(x^\prime,0,t) \in [X,Y]_{1/2}$.
Now, let us point out that Theorem~13.1 in \cite[Chapt.~1]{LIM72i} gives
\begin{align*}
[X,Y]_{1/2} &= [L^2(0,T; H^1(\R^{n-1}_{x^\prime})) \cap H^1_{0,}(0,T;L^2(\R^{n-
1}_{x^\prime})), Y]_{1/2} \\ &= 
[L^2(0,T; H^1(\R^{n-1}_{x^\prime})), Y]_{1/2}\cap [H^1_{0,}(0,T;L^2(\R^{n-1}_{
x^\prime})) , Y ]_{1/2}.
\end{align*}
Consequently, by interpolation we get
\begin{align*}
[X,Y]_{1/2} = L^2(0,T; H^{1/2}(\R^{n-1}_{x^\prime})) \cap H^{1/2}_{0,}(0,T;L^2(
\R^{n-1}_{x^\prime})),
\end{align*}
which corresponds to $H^{1/2}_{0,}(\R^{n-1}_{x^\prime}\times[0,T])$. 
Hence, we conclude that $\gamma_{\Sigma}^iu \in H^{1/2}_{0,}(\Sigma)$.
Surjectivity also follows from Theorem~4.2 in \cite[Chapt.~1]{LIM72i}.
\end{proof}

\noindent
By similar arguments, one can also prove:

\begin{lemma}\label{lemma:traceH10F}
  The interior trace map $\gamma_{\Sigma}^i$ is continuous and surjective from
  $H^{1,1}_{;,0}(Q)$ to $H^{1/2}_{,0}(\Sigma)$.
\end{lemma}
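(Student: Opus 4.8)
The plan is to reduce the statement to Lemma~\ref{lemma:traceH10I} by exploiting the time-reversal symmetry of the interval $(0,T)$, rather than repeating the full interpolation argument. Concretely, I would use the time-reversal map $\kappa_T$ from \eqref{eq:kt}, $\kappa_T w(x,t) = w(x,T-t)$, which interchanges the initial boundary $\Sigma_0$ and the final boundary $\Sigma_T$. The first step is to verify that $\kappa_T$ is an isometric isomorphism from $H^{1,1}_{;,0}(Q)$ onto $H^{1,1}_{;0,}(Q)$. Since $\partial_t(\kappa_T w) = -(\partial_t w)\circ\kappa_T$ and $\nabla_x(\kappa_T w) = (\nabla_x w)\circ\kappa_T$, the substitution $t \mapsto T-t$ gives $\|\partial_t(\kappa_T w)\|_{L^2(Q)} = \|\partial_t w\|_{L^2(Q)}$ and $\|\nabla_x(\kappa_T w)\|_{L^2(Q)} = \|\nabla_x w\|_{L^2(Q)}$, so $\kappa_T$ preserves the $H^{1,1}$ norm; moreover it carries the final condition $w(x,T)=0$ to the initial condition $(\kappa_T w)(x,0)=0$, so the target is precisely $H^{1,1}_{;0,}(Q)$.

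The second step is the commutation relation $\gamma_\Sigma^i \circ \kappa_T = \kappa_T \circ \gamma_\Sigma^i$, where on the right $\kappa_T$ denotes the analogous time reversal on $\Sigma$. This holds because restriction to $\Gamma$ in the spatial variable and reversal in the time variable act on independent coordinates; it suffices to check it on smooth functions and extend by continuity. Together with the boundary statement that $\kappa_T$ maps $H^{1/2}_{0,}(\Sigma)$ isomorphically onto $H^{1/2}_{,0}(\Sigma)$, this transports the conclusion of Lemma~\ref{lemma:traceH10I}. The boundary statement follows from functoriality of the interpolation functor: $\kappa_T$ isometrically interchanges the pair $(H^1_{0,}(0,T;L^2(\Gamma)), L^2(0,T;L^2(\Gamma)))$ with $(H^1_{,0}(0,T;L^2(\Gamma)), L^2(0,T;L^2(\Gamma)))$, so by the very definition of $H^{1/2}_{0,}(0,T;L^2(\Gamma))$ and $H^{1/2}_{,0}(0,T;L^2(\Gamma))$ as the respective $[\cdot,\cdot]_{1/2}$ interpolants it maps one onto the other, while it trivially preserves the factor $L^2(0,T;H^{1/2}(\Gamma))$.

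Assembling these pieces: given $u \in H^{1,1}_{;,0}(Q)$, the function $\kappa_T u$ lies in $H^{1,1}_{;0,}(Q)$, so Lemma~\ref{lemma:traceH10I} yields $\gamma_\Sigma^i(\kappa_T u) \in H^{1/2}_{0,}(\Sigma)$ with a bound in terms of $\|\kappa_T u\|_{H^{1,1}_{;0,}(Q)} = \|u\|_{H^{1,1}_{;,0}(Q)}$. The commutation relation gives $\gamma_\Sigma^i(\kappa_T u) = \kappa_T(\gamma_\Sigma^i u)$, and applying the inverse of the boundary isomorphism $\kappa_T$ returns $\gamma_\Sigma^i u \in H^{1/2}_{,0}(\Sigma)$ with continuity; surjectivity follows by running the same chain in reverse, using surjectivity in Lemma~\ref{lemma:traceH10I} and the fact that every arrow involved is an isomorphism. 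I expect the only delicate point to be the verification that the interpolation-defined spaces transform correctly under $\kappa_T$ at the endpoint $s=1/2$, but since the paper fixes $H^{1/2}_{0,}$ and $H^{1/2}_{,0}$ through the interpolation definition, this reduces to the functoriality already invoked and poses no real obstacle. Alternatively, one may simply mirror the proof of Lemma~\ref{lemma:traceH10I} verbatim, replacing $H^1_{0,}(0,T;\cdot)$ by $H^1_{,0}(0,T;\cdot)$ throughout and applying Theorems~4.2 and~13.1 of \cite[Chapt.~1]{LIM72i} to the correspondingly modified space $X$; the Lions--Magenes machinery is insensitive to which temporal endpoint carries the vanishing condition.
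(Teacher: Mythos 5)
Your proposal is correct, but it takes a different route from the paper: the paper gives no separate argument at all for this lemma, stating only that it follows ``by similar arguments'' to Lemma~\ref{lemma:traceH10I}, i.e.\ by rerunning the half-space reduction and the Lions--Magenes interpolation theorems (Theorems~4.2 and~13.1 of \cite[Chapt.~1]{LIM72i}) with $H^1_{0,}(0,T;\cdot)$ replaced by $H^1_{,0}(0,T;\cdot)$ throughout --- which is exactly your closing ``alternatively'' remark. Your primary argument instead reduces the statement to Lemma~\ref{lemma:traceH10I} via the involution $\kappa_T$, and all three of its ingredients check out: $\kappa_T$ is an isometric isomorphism $H^{1,1}_{;,0}(Q)\to H^{1,1}_{;0,}(Q)$ by the change of variables $t\mapsto T-t$; it commutes with $\gamma_\Sigma^i$ since the spatial restriction and the temporal reflection act on independent coordinates (checked on smooth functions, extended by density); and it maps $H^{1/2}_{0,}(\Sigma)$ onto $H^{1/2}_{,0}(\Sigma)$ because interpolation is functorial for an operator bounded on both endpoint spaces of the couple, $\kappa_T$ interchanges $H^1_{0,}(0,T;L^2(\Gamma))$ and $H^1_{,0}(0,T;L^2(\Gamma))$ isometrically while fixing $L^2$, and $\kappa_T^2=\mathrm{id}$ supplies the inverse --- this last point matters since at the exponent $\theta=1/2$ these interpolation spaces are of $H^{1/2}_{00}$ type and one could not simply identify them with an unconstrained $H^{1/2}$, but the functoriality argument sidesteps any such identification. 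The symmetry reduction buys a genuinely shorter and less error-prone proof (no need to re-examine the half-space localization or Theorem~13.1), at the modest cost of having to set up $\kappa_T$ as an operator on the three function spaces involved; the paper's implicit approach keeps the two lemmas structurally parallel and self-contained. Either is acceptable.
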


\noindent
Finally, we define the lateral trace space
\[
  {\mathcal{H}}_{0,}(\Sigma) := \Big \{ v = \gamma_\Sigma^i V \quad
  \mbox{for all} \; V \in {\mathcal{H}}_{;0,}(Q) \Big \}
\]
with the norm
\[
  \| v \|_{{\mathcal{H}}_{0,}(\Sigma)} := \inf\limits_{V \in
    {\mathcal{H}}_{;0,}(Q): \gamma_\Sigma^i V = v}
  \| V \|_{{\mathcal{H}}(Q)} \, .
\]

\begin{remark}\label{rem:densityTraces}
  By the definition of $\calH_{0,}(\Sigma)$ and using the
  linearity of $\gamma_\Sigma^i$,
  we have that for any $v \in \calH_{0,}(\Sigma)$ there exists a sequence
  $(v_n)_{n\in\N}\subset H^{1/2}_{0,}(\Sigma)$ 
  such that $\lim\limits_{n\to\infty} \norm{v - v_n}{\calH_{0,}(\Sigma)}=0$. 
\end{remark}
  
\begin{remark}
The trace spaces investigated in this paper are closely related to the 
spaces used in  the classical time dependent BEM approach for the wave
equation, introduced by Bamberger and Ha--Duong \cite{BHD86}.
Indeed, as pointed out in \cite[Remark~2]{HAD03},
$H^{1/2}_{0,}(\Sigma)$ agrees with
\begin{align*}
H^{1/2,1/2}_{\sigma,\Gamma}:=\Big \{ u \in LT(\sigma, H^{1/2}(\Gamma))\,;\, 
\int_{\R+i\sigma} \abs{\hat{u}}_{1/2,\omega,\Gamma} d\omega < \infty \Big \}
\end{align*}
when $\sigma = 0$. Additionally, $\left(H^{1/2}_{0,}(\Sigma)\right)^\prime$
corresponds to
\begin{align*}
H^{-1/2,-1/2}_{\sigma,\Gamma}:=\Big \{ u \in LT(\sigma, H^{-1/2}(\Gamma))\,;\, 
\int_{\R+i\sigma} \abs{\hat{u}}_{-1/2,\omega,\Gamma} d\omega < \infty \Big \}
\end{align*}
when $\sigma = 0$. Remarkably, $\sigma$ is taken to be zero for practical
computations and numerical experiments, yet the classical time dependent
BEM does not cover this case. We refer to \cite{HAD03} for the detailed
definitions and a more comprehensive discussion.
\end{remark}

\section{Initial boundary value problems}\label{sec:BVPs}

\subsection{Homogeneous Dirichlet data}
Instead of \eqref{eq:IBVP}, let us first consider the Dirichlet initial
boundary value problem with zero boundary conditions,
\begin{equation}\label{eq:IBVP0}
\begin{array}{rclcl}
\Box u(x,t) & = & f(x,t) & & \text{for} \; (x,t) \in Q, \\
u(x,t) & = & 0 & & \text{for} \; (x,t) \in \Sigma,\\
u(x,0) = \partial_t u(x,t)_{|t=0} & = & 0 & & \text{for} \; x \in \Omega.
\end{array} 
\end{equation}
A possible variational formulation of \eqref{eq:IBVP0} is to find
$u \in H^{1,1}_{0;0,}(Q)$ such that
\begin{equation}\label{eq:IBVP0 VF}
  - \int_0^T \int_\Omega \partial_t u \, \partial_t v \, dx \, dt +
  \int_0^T \int_\Omega \nabla_x u \cdot \nabla_x v \, dx \, dt =
  \int_0^T \int_\Omega f \, v \, dx \, dt
\end{equation}
is satisfied for all $v \in H^{1,1}_{0;,0}(Q)$. When assuming $f \in L^2(Q)$
we are able to construct a unique solution $u \in H^{1,1}_{0;0,}(Q)$ of the
variational formulation \eqref{eq:IBVP0}, satisfying the stability estimate
\cite[Theorem 5.1]{StZ20}, see also \cite[Chapt.~IV, Theorem~3.1]{Lad85},
\[
\| u \|_{H^{1,1}_{0;0,}(Q)} \leq \frac{1}{\sqrt{2}} \, T \, \| f \|_{L^2(Q)} .
\]
While the variational formulation \eqref{eq:IBVP0 VF} is well posed also
for $f \in [H^{1,1}_{0;,0}(Q)]'$, it is not possible to prove a related
inf-sup condition to ensure the existence of a unique solution
$u \in H^{1,1}_{0;0,}(Q)$, see \cite[Theorem 4.2.24]{Zan19}. 
However, by definition we have the inf-sup condition
\begin{equation}\label{inf sup wave}
  \| \Box \widetilde{u} \|_{[H^1_0(Q_-)]'} =
  \sup\limits_{0 \neq v \in H^{1,1}_{0;,0}(Q)}
  \frac{|\langle \Box \widetilde{u} , {\mathcal{E}}v \rangle_{Q_-}|}
  {\| v \|_{H^{1,1}_{;,0}(Q)}} \quad \mbox{for all} \; u \in
  {\mathcal{H}}_{0;0,}(Q),
\end{equation}
and therefore we conclude unique solvability of the variational formulation
to find $u \in {\mathcal{H}}_{0;0,}(Q)$ such that
\begin{equation}\label{eq:gen VF}
  \langle \Box \widetilde{u} , {\mathcal{E}}v \rangle_{Q_-} =
  \langle f , v \rangle_Q
\end{equation}
is satisfied for all $ v \in H^{1,1}_{0;,0}(Q)$, see \cite[Theorem 3.9]{StZ21}.
Moreover, for the solution $u$ it holds
\[
\| \Box \widetilde{u} \|_{[H^1_0(Q_-)]'} =
  \sup\limits_{0 \neq v \in H^{1,1}_{0;,0}(Q)}
  \frac{|\langle \Box \widetilde{u} , {\mathcal{E}}v \rangle_{Q_-}|}
  {\| v \|_{H^{1,1}_{0;,0}(Q)}} =
  \sup\limits_{0 \neq v \in H^{1,1}_{0;,0}(Q)}
  \frac{|\langle f , v \rangle_Q|}
  {\| v \|_{H^{1,1}_{0;,0}(Q)}} \leq \| f \|_{[H^{1,1}_{0;,0}(Q)]'} .
\]
In fact, \eqref{eq:gen VF} is the variational formulation of the operator
equation ${\mathcal{E}}' \Box \widetilde{u} = f$ in
$[H^{1,1}_{0;,0}(Q)]'$, i.e.,
\[
  f_u(v) := \langle {\mathcal{E}}' \Box \widetilde{u} , v \rangle_Q
  = \langle \Box \widetilde{u} , {\mathcal{E}} v \rangle_{Q_-}
  \quad \mbox{for} \; v \in H^{1,1}_{0;,0}(Q) \subset H^{1,1}_{;,0}(Q) 
\]
is a continuous linear functional with norm
\[
  \| f_u \|_{[H^{1,1}_{;,0}(Q)]'} =
  \sup\limits_{0 \neq v \in H^{1,1}_{0;,0}(Q)}
  \frac{|f_u(v)|}{\| v \|_{H^{1,1}_{;,0}(Q)}} =
  \sup\limits_{0 \neq v \in H^{1,1}_{0;,0}(Q)}
  \frac{|\langle \Box \widetilde{u} , {\mathcal{E}} v \rangle_{Q_-}}
  {\| v \|_{H^{1,1}_{;,0}(Q)}} =
  \| \Box \widetilde{u} \|_{[H^1_0(Q_-)]'} .
\]
Recall that for $ u \in H^{1,1}_{0;0,}(Q) \subset {\mathcal{H}}_{0;0,}(Q)$
we have
\[
  f_u(v) = \langle \Box \widetilde{u} , {\mathcal{E}} v \rangle_{Q_-} =
  - \langle \partial_t u , \partial_t v \rangle_{L^2(Q)} +
  \langle \nabla_x u , \nabla_x v \rangle_{L^2(Q)} \quad \mbox{for all} \;
  v \in H^{1,1}_{0;,0}(Q) .
\]
Using the Hahn--Banach theorem, e.g., \cite[Chapt.~IV., Sect.~5]{Yosida:1980},
\cite[Theorem 5.9-1]{Cia13},
there exists a linear continuous functional
$\widetilde{f}_u : H^{1,1,}_{;,0}(Q) \to {\mathbb{R}}$ satisfying
\begin{align}\label{Hahn Banach}
  \widetilde{f}_u(v) = f_u(v) \quad \mbox{for all} \; v \in H^{1,1}_{0;,0}(Q),
  \\
  \| \widetilde{f}_u \|_{[H^{1,1}_{;,0}(Q)]'} =
  \| f_u \|_{[H^{1,1}_{;,0}(Q)]'} =
  \| \Box \widetilde{u} \|_{[H^1_0(Q_-)]'} . \nonumber 
\end{align}
Indeed, for $u\in H^{1,1}_{0;0,}(Q)$, we have the explicit representation
\begin{equation}\label{Representation ftilde}
  \widetilde{f}_u(v) :=
  \langle \Box \widetilde{u} , {\mathcal{E}} v \rangle_{Q_-} =
  - \langle \partial_t u , \partial_t v \rangle_{L^2(Q)} +
  \langle \nabla_x u , \nabla_x v \rangle_{L^2(Q)} \quad \forall \;
  v \in H^{1,1}_{;,0}(Q) .
\end{equation}
In the following we assume $ f \in [H^{1,1}_{;,0}(Q)]'$, and we consider
the variational formulation to find
$\lambda_i \in [H^{1/2}_{,0}(\Sigma)]'$ such that
\begin{equation}\label{VF Def lambda}
  \langle (\gamma_\Sigma^i)' \lambda_i , v \rangle_Q =
  \langle \lambda_i , \gamma_\Sigma^i v \rangle_\Sigma =
  \widetilde{f}_u(v) -
  \langle f , v \rangle_Q \quad \mbox{for all} \; v \in H^{1,1}_{;,0}(Q).
\end{equation}
For $ v \in H^{1,1}_{0;,0}(Q) \subset H^{1,1}_{;,0}(Q)$, it holds
\[
  \widetilde{f}_u(v) - \langle f , v \rangle_Q =
  f_u(v) - \langle f , v \rangle_Q =
  \langle \Box \widetilde{u} , {\mathcal{E}} v \rangle_{Q_-} -
  \langle f , v \rangle_Q = 0,
\]
i.e.,
\[
  \widetilde{f}_u - f \in 
  (\mbox{ker} \, \gamma_\Sigma^i)^0 =
  (H^{1,1}_{0;,0}(Q))^0
  := \Big \{
  g \in [H^{1,1}_{;,0}(Q)]' : \langle g, v \rangle_Q = 0 \quad
  \forall \; v \in H^{1,1}_{0;,0}(Q)
  \Big \} \, .
\]
By the closed range theorem, we obtain
\[
  \widetilde{f}_u - f \in
  \mbox{Im}_{[H^{1/2}_{,0}(\Sigma)]'} (\gamma_\Sigma^i)',
\]
which ensures existence of a solution
$\lambda_i \in [H^{1/2}_{,0}(\Sigma)]'$
of the variational formulation \eqref{VF Def lambda}.
Since the norm in $[H^{1/2}_{,0}(\Sigma)]'$ is defined by duality,
this immediately implies the inf-sup condition
\[
  \| \lambda \|_{[H^{1/2}_{,0}(\Sigma)]'} =
  \sup\limits_{0 \neq v \in H^{1,1}_{;,0}(Q)}
  \frac{|\langle \lambda , \gamma_\Sigma^i v \rangle_\Sigma|}
  {\| v \|_{H^{1,1}_{;,0}(Q)}} \quad
  \mbox{for all} \; \lambda \in [H^{1/2}_{,0}(\Sigma)]',
\]
and therefore uniqueness of $\lambda_i \in [H^{1/2}_{,0}(\Sigma)]'$.
Moreover, this also gives
\begin{align*}
  \| \lambda_i \|_{[H^{1/2}_{,0}(\Sigma)]'} &= 
  \sup\limits_{0 \neq v \in H^{1,1}_{;,0}(Q)}
  \frac{|\langle \lambda_i , \gamma_\Sigma v \rangle_\Sigma|}
        {\| v \|_{H^{1,1}_{;,0}(Q)}} \\
  &= \sup\limits_{0 \neq v \in H^{1,1}_{;,0}(Q)}
        \frac{|\widetilde{f}_u(v) - \langle f , v \rangle_Q|}
      {\| v \|_{H^{1,1}_{;,0}(Q)}} \leq 2 \, \| f \|_{[H^{1,1}_{;,0}(Q)]'},
\end{align*}
where we used
\begin{align*}
  \sup\limits_{0 \neq v \in H^{1,1}_{;,0}(Q)}
  \frac{|\widetilde{f}_u(v)|}{\| v \|_{H^{1,1}_{;,0}(Q)}} &=
  \| \widetilde{f}_u \|_{[H^{1,1}_{;,0}(Q)]'} \\&=
  \| f_u \|_{[H^{1,1}_{;,0}(Q)]'} =
  \| \Box \widetilde{u} \|_{[H^1_0(Q_-)]'} \leq \| f \|_{[H^{1,1}_{;,0}(Q)]'} .
\end{align*}
We now rewrite the variational formulation \eqref{VF Def lambda} as
\[
  \widetilde{f}_u(v) = \langle f , v \rangle_Q +
  \langle \lambda_i , \gamma_\Sigma^i v \rangle_\Sigma , \quad
  v \in H^{1,1}_{;,0}(Q).
\]
In particular, for $u \in H^{1,1}_{0;0,}(Q)$, and using
\eqref{Representation ftilde}, this gives
\[
  - \langle \partial_t u , \partial_t v \rangle_{L^2(Q)} +
  \langle \nabla_x u , \nabla_x v \rangle_{L^2(Q)} =
  \langle f , v \rangle_Q +
  \langle \lambda_i , \gamma_\Sigma^i v \rangle_\Sigma , \quad
  v \in H^{1,1}_{;,0}(Q).
\]
i.e.,
\[
  \Phi(u,v) =
  \langle f , v \rangle_Q + \langle \lambda_i , \gamma_\Sigma^i v \rangle_\Sigma .
\]
When comparing this with Green's first formula \eqref{Green 1 smooth}
for suitable chosen functions, we observe that $\lambda_i$ corresponds
to the spatial normal derivative of $u$.
Hence, also in the general case we shall write
$\gamma_N^i u := \partial_{n_x} u = \lambda_i$
and call this distribution the interior spatial normal derivative of
$ u \in {\mathcal{H}}_{0;0,}(Q)$, i.e.,
\[
\gamma_N^i : {\mathcal{H}}_{0;0,}(Q) \to [H^{1/2}_{,0}(\Sigma)]' .
\]
In a similar way, we also define
\[
\gamma_N^i : {\mathcal{H}}_{0;,0}(Q) \to [H^{1/2}_{0,}(\Sigma)]' .
\]
For a related approach in the case of an elliptic 
equation, see also \cite[pp. 116--117]{McL00}.

\subsection{Inhomogeneous Dirichlet data}
\label{sec:inhomogeneous}
Next we consider the Dirichlet boundary value problem \eqref{eq:IBVP}.
For $g \in {\mathcal{H}}_{0,}(\Sigma)$ there exists, by definition, an
extension $u_g = {\mathcal{E}}_\Sigma g \in {\mathcal{H}}_{;0,}(Q)$, and
the zero extension $\widetilde{u}_g \in L^2(Q_-)$.
Thus, it remains to find
$u_0 := u - u_g \in {\mathcal{H}}_{0;0,}(Q)$ satisfying
\[
  \langle \Box \widetilde{u}_0 , {\mathcal{E}} v \rangle_{Q_-} =
  \langle f , v \rangle_Q -
  \langle \Box \widetilde{u}_g , {\mathcal{E}} v \rangle_{Q_-} 
  \quad \mbox{for all} \;
  v \in H^{1,1}_{0;,0}(Q) .
\]
Note that $u_g \in {\mathcal{H}}_{;0,}(Q) \subset {\mathcal{H}}(Q)$
involves $\Box \widetilde{u}_g \in [H^1_0(Q_-)]'$.
For the solution $u_0$ we obtain
\begin{eqnarray*}
  \| \Box \widetilde{u}_0 \|_{[H^1_0(Q_-)]'}
  & = & \sup\limits_{0 \neq v\in H^{1,1}_{0;,0}(Q)}
  \frac{|\langle \Box \widetilde{u}_0 , {\mathcal{E}} v \rangle_{Q_-}|}
        {\| v \|_{H^{1,1}_{;,0}(Q)}} \, = \,
  \sup\limits_{0 \neq v\in H^{1,1}_{0;,0}(Q)}
        \frac{|\langle f , v \rangle_Q -
        \langle \Box \widetilde{u}_g , {\mathcal{E}} v \rangle_{Q_-}|}
        {\| v \|_{H^{1,1}_{;,0}(Q)}} \\
  & \leq & \| f \|_{[H^{1,1}_{0;,0}(Q)]'} +
        \| {\mathcal{E}} \|_{H^{1,1}_{0;,0}(Q),H^1_0(Q_-)} 
        \| g \|_{{\mathcal{H}}_{0,}(\Sigma)},
\end{eqnarray*}
where we have used
\[
  \| \Box \widetilde{u}_g \|_{[H^1_0(Q_-)]'} \leq
  \sqrt{\| u_g \|_{L^2(Q)}^2 + \| \Box \widetilde{u}_g \|_{[H^1_0(Q_-)]'}^2}
  = \| u_g \|_{{\mathcal{H}}(Q)}
  = \| g \|_{{\mathcal{H}}_{0,}(\Sigma)} .
\]
As before, we can determine $\lambda_i \in [H^{1/2}_{,0}(\Sigma)]'$ as unique
solution of the variational formulation \eqref{VF Def lambda},
and where $\gamma_N^i u := \lambda_i$
is again the spatial normal derivative of the solution $u$ of the
Dirichlet boundary value problem \eqref{eq:IBVP}, satisfying
\begin{equation}\label{eq:lambda f g}
  \| \lambda_i \|_{[H^{1/2}_{,0}(\Sigma)]'} \leq
  2 \, \| f \|_{[H^{1,1}_{0;,0}(Q)]'} +
  \| {\mathcal{E}} \|_{H^{1,1}_{0;,0}(Q),H^1_0(Q_-)} 
  \| g \|_{{\mathcal{H}}_{0,}(\Sigma)} \, .
\end{equation}
Specially, for $ f \equiv 0$, this describes the interior 
Dirichlet to Neumann map $g \mapsto \lambda_i = \gamma_N^i u$, where $u$ is the
solution of the homogeneous wave equation with zero initial data.
This can be written as $ \lambda_i = \OS_i g$, where
$\OS_i : {\mathcal{H}}_{0,}(\Sigma) \to [H^{1/2}_{,0}(\Sigma)]'$ is the
so-called Steklov-Poincar\'e operator, and from \eqref{eq:lambda f g}
we immediately conclude
\begin{equation}\label{SPO boundedness}
  \| \OS_i g \|_{[H^{1/2}_{,0}(\Sigma)]'} \leq c_2^{S_i} \,
  \| g \|_{{\mathcal{H}}_{0,}(\Sigma)} \quad \mbox{for all} \;
  g \in {\mathcal{H}}_{0,}(\Sigma), 
\end{equation}
with $c_2^{S_i} := \| {\mathcal{E}} \|_{H^{1,1}_{0;,0}(Q),H^1_0(Q_-)} $.

As before, we can write the variational formulation \eqref{VF Def lambda} as
\[
  \widetilde{f}_u(v) = \langle f , v \rangle_Q +
  \langle \lambda_i , \gamma_\Sigma^i v \rangle_\Sigma , \quad
  v \in H^{1,1}_{;,0}(Q) .
\]
Now, for $ u \in {\mathcal{H}}_{;0,}(Q)$ there exists a sequence
$ \{ u_n \}_{n \in {\mathbb{N}}} \subset H^{1,1}_{;,0}(Q)$ with
\[
\lim\limits_{n \to \infty} \| u - u_n \|_{{\mathcal{H}}(Q)} = 0 .
\]
Hence we can write
\[
  \widetilde{f}_u(v) = \lim\limits_{n \to \infty} \widetilde{f}_{u_n}(v) =
  \lim\limits_{n \to \infty} \Big[
  - \langle \partial_t u_n , \partial_t v \rangle_{L^2(Q)} +
  \langle \nabla_x u_n , \nabla_x v \rangle_{L^2(Q)} \Big] \, .
\]
In particular, for $v \in H^{1,1}_{;,0}(Q) \cap H^2(Q)$, we can apply integration
by parts to obtain
\begin{eqnarray*}
  \widetilde{f}_u(v)
  & = & \lim\limits_{n \to \infty} \Big[
  \langle u_n , \Box v \rangle_{L^2(Q)} +
  \langle \gamma_\Sigma^i u_n , \gamma_N^i v \rangle_\Sigma -
        \langle u_n(T) , \partial_t v(T) \rangle_{L^2(\Omega)} \Big] \\
  & = & \langle u , \Box v \rangle_{L^2(Q)} +
  \langle \gamma_\Sigma^i u , \gamma_N^i v \rangle_\Sigma -
        \langle u(T) , \partial_t v(T) \rangle_{L^2(\Omega)} \, .
\end{eqnarray*}
With this we finally obtain Green's second formula for the solution
$u \in {\mathcal{H}}_{;0,}(Q)$ of \eqref{eq:IBVP} and
$v \in H^{1,1}_{;,0}(Q) \cap H^2(Q)$,
\begin{equation}\label{Green 2}
\langle u , \Box v \rangle_{L^2(Q)} +
  \langle \gamma_\Sigma^i u , \gamma_N^i v \rangle_\Sigma -
  \langle u(T) , \partial_t v(T) \rangle_{L^2(\Omega)} =
  \langle f , v \rangle_Q +
  \langle \gamma_N^i u , \gamma_\Sigma^i v \rangle_\Sigma .
\end{equation}

\subsection{The Neumann boundary value problem}
We now consider (as in \eqref{VF Def lambda}) the variational problem to find
$ u \in {\mathcal{H}}_{;0,}(Q)$ such that
\begin{equation}\label{NBVP VF}
  \widetilde{f}_u(v)
  =
  \langle f , v \rangle_Q + \langle \lambda , \gamma_\Sigma^i v \rangle_\Sigma
  \quad \mbox{for all} \; v \in H^{1,1}_{;,0}(Q),
\end{equation}
when $\lambda \in [H^{1/2}_{,0}(\Sigma)]'$ is given.
This is the generalized variational formulation of the
Neumann boundary value problem
\begin{equation}\label{eq:Neumann}
\Box u = f \quad \mbox{in} \; Q, \quad
\gamma_N^i u = \lambda \quad \mbox{on} \; \Sigma, \quad
u = \partial_t u = 0 \quad \mbox{on} \; \Sigma_0.
\end{equation}

\begin{lemma}
  For all $ u \in {\mathcal{H}}_{;0,}(Q)$ there holds the inf-sup
  stability condition
  \begin{equation}\label{inf-sup Neumann}
    \frac{\sqrt{2}}{\sqrt{2+T^2}} \,
    \| u \|_{{\mathcal{H}}(Q)} \leq
    \sup\limits_{0 \neq v \in H^{1,1}_{;,0}(Q)}
    \frac{|\widetilde{f}_u(v)|}{\| v \|_{H^{1,1}_{;,0}(Q)}} \, .
  \end{equation}
\end{lemma}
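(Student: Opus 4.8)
The plan is to exploit that the supremum on the right-hand side of \eqref{inf-sup Neumann} has already been identified with a single negative norm. Indeed, by the Hahn--Banach construction \eqref{Hahn Banach} we know that
\[
  S := \sup\limits_{0 \neq v \in H^{1,1}_{;,0}(Q)}
  \frac{|\widetilde{f}_u(v)|}{\| v \|_{H^{1,1}_{;,0}(Q)}}
  = \| \widetilde{f}_u \|_{[H^{1,1}_{;,0}(Q)]'}
  = \| \Box \widetilde{u} \|_{[H^1_0(Q_-)]'},
\]
so that $\| u \|_{\mathcal{H}(Q)}^2 = \| u \|_{L^2(Q)}^2 + S^2$. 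Writing $c := \sqrt{2}/\sqrt{2+T^2}$, a short computation (square both sides and use $(1-c^2)/c^2 = T^2/2$) shows that the asserted bound $c\,\| u \|_{\mathcal{H}(Q)} \le S$ is \emph{equivalent} to the single Poincar\'e-type estimate
\[
  \| u \|_{L^2(Q)} \le \frac{T}{\sqrt{2}}\,S
  = \frac{T}{\sqrt{2}}\,\| \Box \widetilde{u} \|_{[H^1_0(Q_-)]'} .
\]
Thus the whole statement reduces to this one inequality, and the constant $T/\sqrt{2}$ is precisely the stability constant of the wave equation that I intend to feed in below.

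To prove the reduced inequality I would argue by duality in $L^2(Q)$, writing $\| u \|_{L^2(Q)} = \sup_{0 \neq w \in L^2(Q)} \langle u , w \rangle_{L^2(Q)} / \| w \|_{L^2(Q)}$ and testing each $w$ against the solution of an adjoint problem. Concretely, for $w$ in a dense class of smooth data I would introduce $v_w$ as the solution of the \emph{backward Neumann problem}
\[
  \Box v_w = w \ \text{in}\ Q, \qquad
  \gamma_N^i v_w = 0 \ \text{on}\ \Sigma, \qquad
  v_w(\cdot,T) = \partial_t v_w(\cdot,T) = 0 \ \text{on}\ \Sigma_T,
\]
obtained from the forward problem for $\kappa_T w$ via the time reversal \eqref{eq:kt}. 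The point of choosing Neumann rather than Dirichlet data is that the homogeneous condition $\gamma_N^i v_w = 0$ annihilates the lateral boundary contribution in Green's formula. The energy estimate for the wave equation (the Neumann analogue of \cite[Theorem 5.1]{StZ20}, whose multiplier argument is unaffected by a homogeneous Neumann condition) then yields $v_w \in H^{1,1}_{;,0}(Q)$ together with the sharp stability bound $\| v_w \|_{H^{1,1}_{;,0}(Q)} \le (T/\sqrt{2})\,\| w \|_{L^2(Q)}$, which is exactly the constant appearing above.

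With $v_w$ at hand, the computation leading to Green's second formula \eqref{Green 2} gives, for any $u \in \mathcal{H}_{;0,}(Q)$,
\[
  \widetilde{f}_u(v_w) =
  \langle u , \Box v_w \rangle_{L^2(Q)}
  + \langle \gamma_\Sigma^i u , \gamma_N^i v_w \rangle_\Sigma
  - \langle u(\cdot,T) , \partial_t v_w(\cdot,T) \rangle_{L^2(\Omega)}
  = \langle u , w \rangle_{L^2(Q)},
\]
since the last two terms vanish by $\gamma_N^i v_w = 0$ and $\partial_t v_w(\cdot,T) = 0$. Combining this identity with the definition of $S$ and the stability bound for $v_w$ gives $\langle u , w \rangle_{L^2(Q)} = \widetilde{f}_u(v_w) \le S\,\| v_w \|_{H^{1,1}_{;,0}(Q)} \le (T/\sqrt{2})\,S\,\| w \|_{L^2(Q)}$; taking the supremum over $w$ (and removing the smoothness restriction by density of smooth data in $L^2(Q)$) yields the reduced inequality, and hence \eqref{inf-sup Neumann}.

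The main obstacle is the auxiliary backward problem. I must ensure it is well posed with enough regularity, namely $v_w \in H^{1,1}_{;,0}(Q) \cap H^2(Q)$, so that the integration by parts underlying \eqref{Green 2} is applicable, and, more delicately, that its stability holds with the \emph{sharp} constant $T/\sqrt{2}$, since any loss there would immediately degrade the clean factor $\sqrt{2}/\sqrt{2+T^2}$. Establishing $H^2$-regularity on a general Lipschitz domain $\Omega$ is the technical crux; here I would either restrict to smooth data $w$ and pass to the limit, or invoke the variational well-posedness of the Neumann problem directly and verify that the boundary and final-time terms in \eqref{Green 2} still vanish in the limit, so that the identity $\widetilde{f}_u(v_w) = \langle u , w \rangle_{L^2(Q)}$ survives.
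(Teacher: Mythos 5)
Your argument is correct and is essentially the paper's proof in a slightly different packaging: the paper also reduces \eqref{inf-sup Neumann} to the single estimate $\| u \|_{L^2(Q)} \le (T/\sqrt{2})\, \| \widetilde{f}_u \|_{[H^{1,1}_{;,0}(Q)]'}$ (its convex-combination step with $\overline{\alpha}=T^2/(2+T^2)$ is exactly your "square both sides" computation), and it obtains that estimate from the same auxiliary object, namely the solution of the backward-in-time Neumann problem with an $L^2$ volume source and the sharp stability constant $T/\sqrt{2}$. The only substantive difference is that you run an $L^2$-duality over all data $w$ and pair $u$ with $v_w$ via Green's second formula \eqref{Green 2}, which forces you to worry about $H^2(Q)$ regularity of $v_w$ on a Lipschitz domain; the paper instead takes the single datum $w=u_n$ (the optimal choice in your duality, applied to an approximating sequence $u_n\in H^{1,1}_{;0,}(Q)$), defines $w_n\in H^{1,1}_{;,0}(Q)$ purely variationally by $-\langle\partial_t v,\partial_t w_n\rangle_{L^2(Q)}+\langle\nabla_x v,\nabla_x w_n\rangle_{L^2(Q)}=\langle u_n,v\rangle_{L^2(Q)}$ for all $v\in H^{1,1}_{;0,}(Q)$, and reads off $\widetilde{f}_{u_n}(w_n)=\| u_n\|^2_{L^2(Q)}$ by simply testing with $v=u_n$ and using the symmetry of the integrand in \eqref{Representation ftilde}. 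This completely bypasses integration by parts, so the "technical crux" you identify dissolves; your own fallback (invoking variational well-posedness directly) is precisely the right move and is what the paper does, with the limit $n\to\infty$ handled by the definition of $\mathcal{H}_{;0,}(Q)$ as a completion.
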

\begin{proof}
  Using \eqref{Hahn Banach} and the norm definition by duality, we first have
  \[
    \| \Box \widetilde{u} \|_{[H^1_0(Q_-)]'} =
    \| \widetilde{f}_u \|_{[H^{1,1}_{;,0}(Q)]'} =
    \sup\limits_{0 \neq v \in H^{1,1}_{;,0}(Q)}
    \frac{|\widetilde{f}_u(v)|}{\| v \|_{H^{1,1}_{;,0}(Q)}} \, .
  \]
  Now, for $0 \neq u \in {\mathcal{H}}_{;0,}(Q)$, there exists a
  non-trivial sequence $(u_n)_{n \in {\mathbb{N}}} \subset H^{1,1}_{;0,}(Q)$,
  $u_n \not\equiv 0$, with
  \[
    \lim\limits_{n \to \infty} \| u - u_n \|_{{\mathcal{H}}(Q)} = 0.
  \]
  For each $u_n \in H^{1,1}_{;0,}(Q)$ we can write, as in
  \eqref{Representation ftilde},
  \[
    \widetilde{f}_{u_n}(v) =
    - \langle \partial_t u_n , \partial_t v \rangle_{L^2(Q)} +
    \langle \nabla_x u_n ,\nabla_x v \rangle_{L^2(Q)} \quad
    \mbox{for all} \; v \in H^{1,1}_{;,0}(Q) ,
  \]
  and we define $w_n \in H^{1,1}_{;,0}(Q)$ as the unique solution of the
  variational formulation
  \[
    - \langle \partial_t v , \partial_t w_n \rangle_{L^2(Q)} +
    \langle \nabla_x v ,\nabla_x w_n \rangle_{L^2(Q)} =
    \langle u_n , v \rangle_{L^2(Q)} \quad \mbox{for all} \;
    v \in H^{1,1}_{;0,}(Q).
  \]
  This variational formulation corresponds to a Neumann boundary value
  problem for the wave equation with a volume source $u_n \in L^2(Q)$,
  and zero conditions at the terminal time $t=T$. 
  As for the related Dirichlet problem we conclude the bound
  \[
    \| w_n \|_{H^{1,1}_{;,0}(Q)} \leq
    \frac{1}{\sqrt{2}} \, T \, \| u_n \|_{L^2(Q)} .
  \]
  In particular, for the test function $v=u_n$, the variational formulation
  gives
  \[
    - \langle \partial_t u_n , \partial_t w_n \rangle_{L^2(Q)} +
    \langle \nabla_x u_n ,\nabla_x w_n \rangle_{L^2(Q)} = \| u_n \|^2_{L^2(Q)}.
  \]
  With this, we now conclude
  \begin{eqnarray*}
    \| \widetilde{f}_{u_n} \|_{[H^{1,1}_{;,0}(Q)]'}
    & = & \sup\limits_{0 \neq v \in H^{1,1}_{;,0}(Q)}
          \frac{|\widetilde{f}_{u_n}(v)|}{\| v \|_{H^{1,1}_{;,0}(Q)}}
          \geq
          \frac{|\widetilde{f}_{u_n}(w_n)|}{\| w_n \|_{H^{1,1}_{;,0}(Q)}} \\
    & & \hspace*{-2.5cm}
        = \frac{|- \langle \partial_t u_n , \partial_t w_n \rangle_{L^2(Q)} +
          \langle \nabla_x u_n ,\nabla_x w_n \rangle_{L^2(Q)}|}
          {\| w_n \|_{H^{1,1}_{;,0}(Q)}} =
          \frac{\| u_n \|^2_{L^2(Q)}}{\| w_n \|_{H^{1,1}_{;,0}(Q)}} \geq
          \frac{\sqrt{2}}{T} \, \| u_n \|_{L^2(Q)} \, .
  \end{eqnarray*}
  Completion for $n \to \infty$ now gives
  \[
    \| \widetilde{f}_u \|_{[H^{1,1}_{;,0}(Q)]'} \geq
          \frac{\sqrt{2}}{T} \, \| u \|_{L^2(Q)} \, .
  \]
  Hence, we can write, for some $\alpha \in (0,1)$,
  \begin{eqnarray*}
    \| \widetilde{f}_u \|^2_{[H^{1,1}_{;,0}(Q)]'}
    & = & \alpha \, \| \widetilde{f}_u \|^2_{[H^{1,1}_{;,0}(Q)]'} +
          (1-\alpha) \, \| \widetilde{f}_u \|^2_{[H^{1,1}_{;,0}(Q)]'} \\
    & \geq & \alpha \, \frac{2}{T^2} \, \| u \|_{L^2(Q)}^2 +
             (1-\alpha) \, \| \Box \widetilde{u} \|^2_{[H^1_0(Q_-)]'} \\
    & = & (1-\overline{\alpha}) \Big( \| u \|_{L^2(Q)}^2 +
          \| \Box \widetilde{u} \|^2_{[H^1_0(Q_-)]'}  \Big) =
          (1-\overline{\alpha}) \, \| u \|^2_{{\mathcal{H}}(Q)},
  \end{eqnarray*}
  when
  \[
    1-\overline{\alpha} = \overline{\alpha} \, \frac{2}{T^2}
  \]
  is satisfied, i.e.,
  \[
    \overline{\alpha} = \frac{T^2}{2+T^2}, \quad
    1-\overline{\alpha} = \frac{2}{2+T^2} .
  \]
  This concludes the proof.
\end{proof}

\begin{lemma}
  For all $ 0 \neq v \in H^{1,1}_{;,0}(Q)$, there exists a function
  $u_v \in {\mathcal{H}}_{;0,}(Q)$ such that
  \begin{equation}\label{injectivity Neumann}
    \widetilde{f}_{u_v}(v) > 0 .
  \end{equation}
\end{lemma}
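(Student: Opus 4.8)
The plan is to construct the required $u_v$ explicitly from $v$ by means of the transformation operator $\calH_T$, exploiting that $\calH_T$ was designed precisely to render the space--time bilinear form positive (Proposition~\ref{proposition Hilbert}\,(4)). First I would set $u_v := \calH_T^{-1} v$. Since $\calH_T^{-1} : H^{1,1}_{;,0}(Q) \to H^{1,1}_{;0,}(Q)$ is norm preserving (Proposition~\ref{proposition Hilbert}\,(3)), it is injective, so $u_v \in H^{1,1}_{;0,}(Q) \subset {\mathcal{H}}_{;0,}(Q)$ and $u_v \neq 0$ whenever $v \neq 0$. Because $u_v \in H^{1,1}_{;0,}(Q)$, the representation \eqref{Representation ftilde} applies (exactly as it was used for $u_n \in H^{1,1}_{;0,}(Q)$ in the previous lemma) and gives $\widetilde{f}_{u_v}(v) = - \langle \partial_t u_v , \partial_t v \rangle_{L^2(Q)} + \langle \nabla_x u_v , \nabla_x v \rangle_{L^2(Q)}$.

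Next I would rewrite both terms using the identity $v = \calH_T u_v$. For the temporal term, Proposition~\ref{proposition Hilbert}\,(2) yields $\partial_t v = \partial_t \calH_T u_v = - \calH_T^{-1} \partial_t u_v$ (here the vanishing initial value $u_v(\cdot,0)=0$ is used), so that $- \langle \partial_t u_v , \partial_t v \rangle_{L^2(Q)} = \langle \partial_t u_v , \calH_T^{-1} \partial_t u_v \rangle_{L^2(Q)} = \langle \calH_T \partial_t u_v , \partial_t u_v \rangle_{L^2(Q)}$ by Proposition~\ref{proposition Hilbert}\,(1). For the spatial term, since $\calH_T$ acts only on the temporal coefficients while the spatial eigenfunctions $\varphi_i$ factor out, it commutes with $\nabla_x$; hence $\nabla_x v = \calH_T \nabla_x u_v$ and $\langle \nabla_x u_v , \nabla_x v \rangle_{L^2(Q)} = \langle \nabla_x u_v , \calH_T \nabla_x u_v \rangle_{L^2(Q)}$. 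Applying Proposition~\ref{proposition Hilbert}\,(4) to $w = \partial_t u_v$ and to each component $w = \partial_{x_j} u_v$ then shows that both contributions are nonnegative, whence $\widetilde{f}_{u_v}(v) \geq 0$.

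It remains to upgrade nonnegativity to strict positivity, and this is the crux. Here I would invoke that the quadratic form $w \mapsto \langle w , \calH_T w \rangle_{L^2(Q)}$ is not merely positive semidefinite but strictly positive definite, i.e.\ $\langle w , \calH_T w \rangle_{L^2(Q)} = 0$ forces $w = 0$; this follows from the spectral analysis of $\calH_T$ in \cite{StZ20,Zan19} (concretely, in the temporal sine basis the symmetric part of $\calH_T$ decouples, according to the parity of the index, into two Cauchy/Hilbert-type matrices with entries proportional to $(k+l+1)^{-1}$, which are positive definite). Supposing $\widetilde{f}_{u_v}(v)=0$, both nonnegative terms must vanish; strict positivity then gives $\partial_t u_v = 0$ and $\nabla_x u_v = 0$ almost everywhere in $Q$, so $u_v$ is constant. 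Since $u_v \in H^{1,1}_{;0,}(Q)$ has vanishing initial trace $u_v(\cdot,0)=0$, this constant is zero, i.e.\ $u_v = 0$, contradicting $u_v \neq 0$. Therefore $\widetilde{f}_{u_v}(v) > 0$.

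I anticipate the main obstacle to be exactly this last point: Proposition~\ref{proposition Hilbert}\,(4) only furnishes semidefiniteness, so the strict inequality cannot be read off directly and must be supplied either by citing the positive definiteness of $\calH_T$ from \cite{StZ20,Zan19} or by the short Cauchy-matrix observation indicated above. A secondary technical point to state carefully is the commutation $\nabla_x \calH_T = \calH_T \nabla_x$, which rests on the fact that $\calH_T$ is defined through the common temporal basis with the spatial eigenfunctions $\varphi_i$ left untouched; this is routine but worth recording, as is the remark that the time-reversal map $\kappa_T$ would \emph{not} deliver the needed sign, which is precisely the reason $\calH_T$ is the right tool here.
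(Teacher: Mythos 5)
Your proof is correct, but it follows a genuinely different route from the paper. The paper's argument is a two-line duality trick: it takes $u_v \in H^{1,1}_{;0,}(Q)$ to be the unique solution of the auxiliary variational problem $-\langle \partial_t u_v , \partial_t w \rangle_{L^2(Q)} + \langle \nabla_x u_v , \nabla_x w \rangle_{L^2(Q)} = \langle v , w \rangle_{L^2(Q)}$ for all $w \in H^{1,1}_{;,0}(Q)$ (a Neumann-type wave problem with volume source $v$), and then simply tests with $w=v$ to obtain $\widetilde{f}_{u_v}(v) = \| v \|^2_{L^2(Q)} > 0$; strict positivity is immediate and no property of $\calH_T$ is needed, at the price of invoking well-posedness of that auxiliary problem. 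You instead construct $u_v = \calH_T^{-1} v$ explicitly, which is arguably more concrete and sidesteps the auxiliary solve, but it forces you to upgrade Proposition~\ref{proposition Hilbert}(4) from semidefiniteness to strict definiteness of $w \mapsto \langle w, \calH_T w\rangle_{L^2(Q)}$ — you correctly identify this as the crux, and the ingredient is indeed available in \cite{StZ20,Zan19} (or via your Cauchy/Gram-matrix observation, $\tfrac{1}{k+l+1}=\int_0^1 x^k x^l\,dx$, which shows the quadratic form equals $\int_0^1 |\sum_k w_k x^k|^2 dx$ on each parity class), so your argument closes. Your intermediate steps (the identity $\partial_t v = -\calH_T^{-1}\partial_t u_v$ from Proposition~\ref{proposition Hilbert}(2), the self-adjointness swap via Proposition~\ref{proposition Hilbert}(1), and the commutation $\nabla_x \calH_T = \calH_T \nabla_x$ coming from the tensor-product structure of the expansion in $\varphi_i$ and temporal modes) are all sound, and both proofs rely on the same implicit convention that the explicit representation \eqref{Representation ftilde} of $\widetilde{f}_u$ is the one used for $u \in H^{1,1}_{;0,}(Q)$. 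In short: the paper buys brevity by leaning on solvability of an adjoint-in-time Neumann problem; you buy explicitness at the cost of one extra spectral fact about $\calH_T$ that the paper's stated Proposition does not supply.
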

\begin{proof}
  For $ 0 \neq v \in H^{1,1}_{;,0}(Q)$, there exists a unique solution
  $u_v \in H^{1,1}_{;0,}(Q) \subset {\mathcal{H}}_{;0,}(Q)$, satisfying
  \[
    - \langle \partial_t u_v , \partial_t w \rangle_{L^2(Q)} +
    \langle \nabla_x u_v , \nabla_x w \rangle_{L^2(Q)} =
    \langle v , w \rangle_{L^2(Q)} \quad
    \mbox{for all} \; w \in H^{1,1}_{;,0}(Q),
  \]
  and, for $w=v$, we obtain
  \[
    \widetilde{f}_{u_v}(v) =
    - \langle \partial_t u_v , \partial_t v \rangle_{L^2(Q)} +
    \langle \nabla_x u_v , \nabla_x v \rangle_{L^2(Q)} = \| v \|^2_{L^2(Q)} > 0 .
  \]
\end{proof}

\noindent
The inf-sup condition \eqref{inf-sup Neumann} and the
surjectivity condition \eqref{injectivity Neumann} ensure unique
solvability of the variational formulation \eqref{NBVP VF}, i.e., for
the unique solution $u \in {\mathcal{H}}_{;0,}(Q)$ we obtain
\begin{eqnarray*}
  \frac{\sqrt{2}}{\sqrt{2+T^2}} \, \| u \|_{{\mathcal{H}}(Q)}
  & \leq & \sup\limits_{0 \neq v \in H^{1,1}_{;,0}(Q)}
           \frac{|\widetilde{f}_u(v)|}{\| v \|_{H^{1,1}_{;,0}(Q)}} \\
  & = & \sup\limits_{0 \neq v \in H^{1,1}_{;,0}(Q)}
        \frac{|\langle f , v \rangle_Q +
        \langle \lambda , \gamma_\Sigma^i v \rangle_\Sigma|}
        {\| v \|_{H^{1,1}_{;,0}(Q)}} \, \leq \,
        \| f \|_{[H^{1,1}_{;,0}(Q)]'} +
        \| \lambda \|_{[H^{1/2}_{,0}(\Sigma)]'},
\end{eqnarray*}
and when taking the lateral trace this gives
\begin{equation}\label{eq:u f lambda}
  \| \gamma_\Sigma^i u \|_{{\mathcal{H}}_{0,}(\Sigma)} \leq
  \| u \|_{{\mathcal{H}}(Q)} \leq \frac{1}{\sqrt{2}} \sqrt{2+T^2} \,
  \Big[ \| f \|_{[H^{1,1}_{;,0}(Q)]'} +
  \| \lambda \|_{[H^{1/2}_{,0}(\Sigma)]'} \Big] .
\end{equation}
In particular, for $f \equiv 0$, this defines the interior Neumann to Dirichlet
map $ \lambda \mapsto \gamma_\Sigma^i u$ which can be written as
$\gamma_\Sigma^i u = \OS_i^{-1} \lambda$ when using the inverse of the
Steklov--Poincar\'e operator $\OS_i$. From \eqref{eq:u f lambda}
we then conclude
\begin{equation}\label{ISPO boundedness}
  \| \OS_i^{-1} \lambda \|_{{\mathcal{H}}_{0,}(\Sigma)} \leq
   c_2^{S_i^{-1}} \,
   \| \lambda \|_{[H^{1/2}_{,0}(\Sigma)]'} \quad \mbox{for all} \;
   \lambda \in [H^{1/2}_{,0}(\Sigma)]', \quad
   c_2^{S_i^{-1}} := \frac{1}{\sqrt{2}} \sqrt{2+T^2} \, .
\end{equation}
Now, using \eqref{SPO boundedness}
and duality this gives
\[
  \| \lambda \|_{[H^{1/2}_{,0}(\Sigma)]'} =
  \| \OS_i \gamma_\Sigma^i u \|_{[H^{1/2}_{,0}(\Sigma)]'} \leq
  c_2^{S_i} \, \| \gamma_\Sigma^i u \|_{{\mathcal{H}}_{0,}(\Sigma)} =
  c_2^{S_i} \sup\limits_{0 \neq \mu \in [{\mathcal{H}}_{0,}(\Sigma)]'}
  \frac{|\langle \gamma_\Sigma^i u , \mu \rangle_\Sigma|}
  {\| \mu \|_{[{\mathcal{H}}_{0,}(\Sigma)]'}},
\]
i.e., the inf-sup stability condition
\begin{equation}\label{inf-sup inverse SPO}
  \frac{1}{c_2^{S_i}} \, \| \lambda \|_{[H^{1/2}_{,0}(\Sigma)]'} \leq
  \sup\limits_{0 \neq \mu \in [{\mathcal{H}}_{0,}(\Sigma)]'}
  \frac{|\langle \OS_i^{-1} \lambda , \mu \rangle_\Sigma|}
  {\| \mu \|_{[{\mathcal{H}}_{0,}(\Sigma)]'}} \quad
  \mbox{for all} \; \lambda \in [H^{1/2}_{,0}(\Sigma)]' .
\end{equation}
Furthermore, using \eqref{ISPO boundedness} for $g_i := \gamma_\Sigma^i u$
and duality we obtain
\begin{align*}
  \| g_i \|_{{\mathcal{H}}_{0,}(\Sigma)} &=
  \| \gamma_\Sigma^i u \|_{{\mathcal{H}}_{0,}(\Sigma)} =
  \| \OS_i^{-1} \lambda \|_{{\mathcal{H}}_{0,}(\Sigma)} \\
  &\leq
  c_2^{S_i^{-1}} \, \| \lambda \|_{[H^{1/2}_{,0}(\Sigma)]'} =
  c_2^{S_i^{-1}} \,
  \sup\limits_{0 \neq v \in H^{1/2}_{,0}(\Sigma)}
  \frac{|\langle \lambda , v \rangle_\Sigma|}{\| v \|_{H^{1/2}_{,0}(\Sigma)}},
\end{align*}
i.e., the inf-sup condition
\begin{equation}\label{inf-sup SPO}
  \frac{1}{c_2^{S_i^{-1}}} \,
  \| g_i \|_{{\mathcal{H}}_{0,}(\Sigma)} \leq
   \sup\limits_{0 \neq v \in H^{1/2}_{,0}(\Sigma)}
   \frac{|\langle \OS_i g_i , v \rangle_\Sigma|}{\| v \|_{H^{1/2}_{,0}(\Sigma)}}
   \quad \mbox{for all} \; g_i \in {\mathcal{H}}_{0,}(\Sigma) .
\end{equation}

\subsection{Adjoint problems}
Related to the variational problem \eqref{NBVP VF} we now
consider the adjoint problem to find
$w\in H^{1,1}_{;,0}(Q)$ such that
\begin{equation}\label{adjoint VF}
  \widetilde{f}_u(w) =
  \langle f , u \rangle_Q + \langle g , \gamma_\Sigma^i u \rangle_\Sigma
\end{equation}
is satisfied for all $u \in {\mathcal{H}}_{;0,}(Q)$.
For $ w \in H^{1,1}_{;,0}(Q)$, let $u_w \in {\mathcal{H}}_{;0,}(Q)$
be the unique solution of the variational problem
\[
  \widetilde{f}_{u_w}(v) =
  \langle \partial_t w , \partial_t v \rangle_{L^2(Q)} +
  \langle \nabla_x w , \nabla_x v \rangle_{L^2(Q)} \quad
  \mbox{for all} \; v \in H^{1,1}_{;,0}(Q) .
\]
For $v=w$, this gives
\[
\widetilde{f}_{u_w}(w) = \| w \|^2_{H^{1,1}_{;,0}(Q)} \, .
\]
Moreover, using the inf-sup stability condition
\eqref{inf-sup Neumann}, we obtain
\begin{eqnarray*}
  \frac{\sqrt{2}}{\sqrt{2+T^2}} \, \| u_w \|_{{\mathcal{H}}(Q)}
  & \leq & \sup\limits_{0 \neq v \in H^{1,1}_{;,0}(Q)}
           \frac{|\widetilde{f}_{u_w}(v)|}{\| v \|_{H^{1,1}_{;,0}(Q)}} \\
  & = & \sup\limits_{0 \neq v \in H^{1,1}_{;,0}(Q)}
        \frac{|\langle \partial_t w , \partial_t v \rangle_{L^2(Q)} +
        \langle \nabla_x w , \nabla_x v \rangle_{L^2(Q)}|}
        {\| v \|_{H^{1,1}_{;,0}(Q)}} \, \leq \, \| w \|_{H^{1,1}_{;,0}(Q)},
\end{eqnarray*}
and thus it follows that
\[
  \widetilde{f}_{u_w}(w) = \| w \|^2_{H^{1,1}_{;,0}(Q)} \geq
  \frac{\sqrt{2}}{\sqrt{2+T^2}} \, \| u_w \|_{{\mathcal{H}}(Q)}
  \| w \|_{H^{1,1}_{;,0}(Q)} .
\]
In other words, we have
\[
  \frac{\sqrt{2}}{\sqrt{2+T^2}} \,
  \| w \|_{H^{1,1}_{;,0}(Q)} \leq
  \sup\limits_{0 \neq u \in {\mathcal{H}}_{;0,}(Q)}
  \frac{|\widetilde{f}_u(w)|}
  {\| u \|_{{\mathcal{H}}(Q)}} \quad
  \mbox{for all} \; w \in H^{1,1}_{;,0}(Q) .
\]
Since the inf-sup condition \eqref{inf-sup Neumann} also
implies surjectivity, unique solvability of the variational formulation
\eqref{adjoint VF} follows. In fact, for $ f \in [{\mathcal{H}}_{;0,}(Q)]'$
and $g \in [{\mathcal{H}}_{0,}(\Sigma)]'$ we have $w \in H^{1,1}_{;,0}(Q)$ as the
weak solution of the adjoint Neumann problem for the wave equation
\begin{equation}\label{adjoint wave Neumann}
  \Box w = f \quad \mbox{in} \; Q, \quad
  \gamma_N^i w = g \quad \mbox{on} \; \Sigma, \quad
  w = \partial_t w = 0 \quad \mbox{on} \; \Sigma_T.
\end{equation}

\section{Boundary Integral Equations}\label{sec:BIE}

\subsection{Representation formula}
\label{ssec:FsRfRp}
Let $u \in {\mathcal{H}}_{;0,}(Q)$ be a solution of the generalized
wave equation ${\mathcal{E}}' \Box u = f$ in $[H^{1,1}_{;,0}(Q)]'$ .
For $(x,t) \in Q$  and
$v(y,\tau)=\kappa_tG(x-y,\tau)$, with $G(\cdot,\cdot)$ being the fundamental
solution introduced in \eqref{eq:fundsol} and $\kappa_t$ the 
time-reversal map from \eqref{eq:kt}, formula \eqref{Green 2}
becomes the representation formula
\begin{align*}
  u(x,t) = \int_0^t \int_\Omega f(y,\tau) \,  G(x-y,t-\tau)
  \, dy \, d\tau + \dual{\gamma_N^i u}{\gamma_{\Sigma}^i v}_{\Sigma} -
  \dual{\gamma_{\Sigma}^i u}{\gamma_N^i v}_{\Sigma}. 
\end{align*}
In particular, for $f \equiv 0$, we conclude the following
representation formula 
\begin{align}
u(x,t)  \label{eq:repf}
= (\mathscr{S}\gamma_N^i u)(x,t) - (\mathscr{D}\gamma_{\Sigma}^i  u)(x,t),
\quad (x,t) \in Q,
\end{align}
with the single and double layer potentials $\mathscr{S}$ and
$\mathscr{D}$, defined as in \eqref{eq:SL} and \eqref{eq:DL}, respectively.

\subsection{Single layer potential}
We first recall the definition \eqref{eq:SL} of the single layer potential
\[
  u_w(x,t) = ({\mathscr{S}} w)(x,t) =
  \int_0^t \int_\Gamma G(x-y,t-\tau) \, w(y,\tau) \, ds_y \, d\tau , \quad
  (x,t) \in Q .
\]

\begin{proposition}\label{prop:Scont}
  For the single layer potential we have
  \[
    {\mathscr{S}} : [H^{1/2}_{,0}(\Sigma)]' \to {\mathcal{H}}_{;0,}(Q) .
  \]
\end{proposition}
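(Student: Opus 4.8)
The plan is to prove the mapping property by first treating smooth, causal densities $w$, which are dense in $[H^{1/2}_{,0}(\Sigma)]'$, establishing the bound $\| \mathscr{S}w \|_{\mathcal{H}(Q)} \lesssim \| w \|_{[H^{1/2}_{,0}(\Sigma)]'}$ for them, and then extending $\mathscr{S}$ by continuity. For such $w$ the potential $u_w := \mathscr{S}w$ is a classical solution of $\Box u_w = 0$ in the interior of $Q$, since the source of the single layer potential lives on $\Gamma = \partial\Omega$ and never meets an interior point $x \in \Omega$. Moreover, causality of the retarded potential \eqref{eq:SL} forces $u_w(x,0)=0$ and $\partial_t u_w(x,0)=0$, so the zero extension $\widetilde{u}_w$ vanishes for $t<0$ and lies in $L^2(Q_-)$. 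First I would record that, for smooth $w$, $u_w \in H^{1,1}_{;0,}(Q) \subset \mathcal{H}_{;0,}(Q)$, so that membership in the correct closed subspace is automatic before passing to the limit.

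The second step is to show that $\Box\widetilde{u}_w = 0$ as an element of $[H^1_0(Q_-)]'$. For any test function $\varphi \in C^\infty_0(Q_-)$ the definition gives $\langle \Box\widetilde{u}_w , \varphi \rangle_{Q_-} = \int_0^T \int_\Omega u_w \, \Box\varphi \, dx \, dt$; integrating by parts twice, the lateral contributions on $\Gamma$ and the temporal contributions at $t=\pm T$ vanish because $\varphi$ has compact support in $Q_-$, while the contributions at $t=0$ vanish by the zero Cauchy data of $u_w$. Since $\Box u_w = 0$ in $Q$, the functional is identically zero on the dense subspace $C^\infty_0(Q_-) \subset H^1_0(Q_-)$, hence $\Box\widetilde{u}_w = 0$ in $[H^1_0(Q_-)]'$. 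Consequently $\| u_w \|_{\mathcal{H}(Q)} = \| u_w \|_{L^2(Q)}$, and the whole estimate reduces to bounding the $L^2(Q)$ norm of the single layer potential.

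The heart of the argument, and the step I expect to be hardest, is the $L^2(Q)$ bound obtained by duality. For $\phi \in L^2(Q)$ I would use Fubini and the symmetry of the fundamental solution $G$ to rewrite $\langle \mathscr{S}w , \phi \rangle_{L^2(Q)} = \langle w , \gamma_\Sigma^i q \rangle_\Sigma$, where $q$ is the anti-causal volume potential of $\phi$, i.e.\ the solution of $\Box q = \phi$ in $Q$ with vanishing terminal data $q(\cdot,T)=\partial_t q(\cdot,T)=0$. By the energy estimate for the wave equation with $L^2$ right-hand side (the terminal-value analogue of \cite[Theorem 5.1]{StZ20}) one has $q \in H^{1,1}_{;,0}(Q)$ with $\| q \|_{H^{1,1}_{;,0}(Q)} \lesssim \| \phi \|_{L^2(Q)}$, and the trace result of Lemma~\ref{lemma:traceH10F} yields $\gamma_\Sigma^i q \in H^{1/2}_{,0}(\Sigma)$ with $\| \gamma_\Sigma^i q \|_{H^{1/2}_{,0}(\Sigma)} \lesssim \| \phi \|_{L^2(Q)}$. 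Hence $|\langle \mathscr{S}w , \phi \rangle_{L^2(Q)}| \leq \| w \|_{[H^{1/2}_{,0}(\Sigma)]'} \, \| \gamma_\Sigma^i q \|_{H^{1/2}_{,0}(\Sigma)} \lesssim \| w \|_{[H^{1/2}_{,0}(\Sigma)]'} \, \| \phi \|_{L^2(Q)}$, and taking the supremum over $\phi$ gives $\| u_w \|_{L^2(Q)} \lesssim \| w \|_{[H^{1/2}_{,0}(\Sigma)]'}$. The delicate points here are making the adjoint identity rigorous when $G$ is only a distribution (notably the $n=3$ case with the Dirac measure on the light cone), and confirming that the restriction to $Q$ of the full-space terminal-value potential indeed carries the claimed $H^{1,1}_{;,0}(Q)$ regularity.

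Finally, combining the two bounds gives $\| \mathscr{S}w \|_{\mathcal{H}(Q)} = \| u_w \|_{L^2(Q)} \lesssim \| w \|_{[H^{1/2}_{,0}(\Sigma)]'}$ on the dense class of smooth causal densities. Since $\mathcal{H}_{;0,}(Q)$ is closed in $\mathcal{H}(Q)$ and each $\mathscr{S}w$ already belongs to $H^{1,1}_{;0,}(Q) \subset \mathcal{H}_{;0,}(Q)$, the unique continuous extension of $\mathscr{S}$ to all of $[H^{1/2}_{,0}(\Sigma)]'$ takes values in $\mathcal{H}_{;0,}(Q)$, which is the assertion. I would close by noting that the density of smooth causal densities in $[H^{1/2}_{,0}(\Sigma)]'$ follows from the Gelfand triple $H^{1/2}_{,0}(\Sigma) \subset L^2(\Sigma) \subset [H^{1/2}_{,0}(\Sigma)]'$.
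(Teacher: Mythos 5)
Your argument is correct in outline and shares its analytic core with the paper's proof: both hinge on the Fubini identity $\langle \mathscr{S}w,\psi\rangle_Q=\langle w,\gamma_\Sigma^i\varphi_\psi\rangle_\Sigma$, where $\varphi_\psi$ is the anticausal volume potential of the test function, on the assertion that this potential lies in $H^{1,1}_{;,0}(Q)$ with norm controlled by the datum, and on Lemma~\ref{lemma:traceH10F} to place its lateral trace in $H^{1/2}_{,0}(\Sigma)$. The packaging, however, is genuinely different. The paper pairs $u_w$ directly against $\psi\in[{\mathcal{H}}_{;0,}(Q)]'$ and reads off membership in ${\mathcal{H}}_{;0,}(Q)$ from the well-posedness of the adjoint Neumann problem \eqref{adjoint wave Neumann}; you instead first observe that $\Box\widetilde{u}_w=0$ in $[H^1_0(Q_-)]'$ (the distribution vanishes on the dense subspace $C^\infty_0(Q_-)$), so that $\|u_w\|_{{\mathcal{H}}(Q)}=\|u_w\|_{L^2(Q)}$, and then you only need the duality against $\phi\in L^2(Q)$, recovering membership in the closed subspace ${\mathcal{H}}_{;0,}(Q)$ by density of smooth causal densities. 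This buys two things: the quantitative estimate is reduced to an $L^2(Q)$ bound (a strictly smaller class of test functions than the paper's $[{\mathcal{H}}_{;0,}(Q)]'$), and the identification of $u_w$ as an element of ${\mathcal{H}}_{;0,}(Q)$ becomes a clean closed-subspace argument rather than an implicit appeal to reflexivity through the $L^2$ pairing. The price is the extra density step and the need to verify $\mathscr{S}w\in H^{1,1}_{;0,}(Q)$ for smooth $w$, both standard. Be aware that the step you yourself flag as delicate --- that the restriction to $Q$ of the anticausal volume potential of an $L^2(Q)$ right-hand side lies in $H^{1,1}_{;,0}(Q)$ with $\|q\|_{H^{1,1}_{;,0}(Q)}\lesssim\|\phi\|_{L^2(Q)}$, without any control of its lateral boundary data --- is asserted rather than proved in the paper as well (it simply declares $\varphi_\psi\in H^{1,1}_{;,0}(Q)$ as a solution of the adjoint problem), so your proposal is at the same level of rigor as the original on the one point where both arguments carry real technical weight.
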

\begin{proof}
  For $u_w={\mathscr{S}}w$ and a suitable $\psi$, we can write the duality
  pairing as extension of the inner product in $L^2(Q)$ as
\begin{eqnarray*}
  \langle u_w , \psi \rangle_Q
  & = & \int_0^T \int_\Omega u_w(x,t) \, \psi(x,t) \, dx \, dt \\
  & = & \int_0^T \int_\Omega \int_0^t \int_\Gamma G(x-y,t-\tau) \,
        w(y,\tau) \, ds_y \, d\tau \, \psi(x,t) \, dx \, dt \\
  & = & \int_0^T \int_\Gamma w(y,\tau)
        \int_\tau^T G(x-y,t-\tau) \, \psi(x,t) \, dx \, dt \, ds_y \, d\tau \\
  & = & \int_0^T \int_\Gamma w(y,\tau) \,
        \varphi_\psi(y,\tau) \, ds_y \, d\tau \\[2mm]
  & = & \langle w , \gamma_\Sigma^i \varphi_\psi \rangle_\Sigma,
\end{eqnarray*}
where
\[
  \varphi_\psi(y,\tau) =
  \int_\tau^T G(x-y,t-\tau) \, \psi(x,t) \, dx \, dt, \quad (y,\tau) \in Q,
\]
is a solution of the adjoint problem
\eqref{adjoint wave Neumann}. Hence, for
$\psi \in [{\mathcal{H}}_{;0,}(Q)]'$, we obtain
$\varphi_\psi \in H^{1,1}_{;,0}(Q)$, and therefore
$\gamma_\Sigma^i \varphi_\psi \in H^{1/2}_{,0}(\Sigma)$.
From this, we conclude that
$ u_w \in {\mathcal{H}}_{;0,}(Q)$ when
$w \in [H^{1/2}_{,0}(\Sigma)]'$ is given.
\end{proof}

\noindent
As a corollary of the previous result, we can define the single layer
boundary integral operator
\begin{equation}\label{Def V}
  \OV := \gamma_\Sigma^i {\mathscr{S}} : [H^{1/2}_{,0}(\Sigma)]' \to
  {\mathcal{H}}_{0,}(\Sigma) ,
\end{equation}
and the normal derivative of the single layer potential,
\begin{equation}\label{Def V deriv}
  \gamma_N^i {\mathscr{S}} : [H^{1/2}_{,0}(\Sigma)]' \to
  [H^{1/2}_{,0}(\Sigma)]'.
\end{equation}

\subsection{Double layer potential} We first recall the definition
\eqref{eq:DL} of the double layer potential
\[
  u_z(x,t) = ({\mathscr{D}} z)(x,t) =
  \int_0^t \int_\Gamma \partial_{n_y} G(x-y,t-\tau) \,
  z(y,\tau) \, ds_y \, d\tau , \quad (x,t) \in Q .
\]

\begin{proposition}\label{prop:Dcont}
  For the double layer potential we have
  \[
    {\mathscr{D}} : {\mathcal{H}}_{0,}(\Sigma) \to {\mathcal{H}}_{;0,}(Q) .
  \]
\end{proposition}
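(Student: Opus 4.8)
The plan is to mirror the duality argument used for the single layer potential in Proposition~\ref{prop:Scont}. I fix a test function $\psi \in [\mathcal{H}_{;0,}(Q)]'$ and write the $L^2(Q)$ duality pairing of $u_z = \mathscr{D}z$ against $\psi$. Substituting the definition \eqref{eq:DL} and exchanging the order of integration (the region being $0 \le \tau \le t \le T$, $x \in \Omega$, $y \in \Gamma$) by Fubini, the operator $\partial_{n_y}$ can be pulled out of the $(x,t)$-integral, so that
\begin{align*}
  \langle u_z , \psi \rangle_Q
  = \int_0^T \int_\Gamma z(y,\tau) \, \gamma_N^i \varphi_\psi(y,\tau) \, ds_y \, d\tau
  = \langle \gamma_N^i \varphi_\psi , z \rangle_\Sigma ,
\end{align*}
where $\varphi_\psi$ is the very same adjoint solution appearing in the proof of Proposition~\ref{prop:Scont}, namely
\begin{align*}
  \varphi_\psi(y,\tau) = \int_\tau^T \int_\Omega G(x-y,t-\tau) \, \psi(x,t) \, dx \, dt .
\end{align*}
Thus the double layer differs from the single layer only in that the \emph{Neumann} trace $\gamma_N^i \varphi_\psi$ now appears in place of the Dirichlet trace $\gamma_\Sigma^i \varphi_\psi$.

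From the single layer argument I may reuse that $\psi \in [\mathcal{H}_{;0,}(Q)]'$ forces $\varphi_\psi \in H^{1,1}_{;,0}(Q)$, with the attendant continuity estimate following from the well-posedness of the adjoint Neumann problem \eqref{adjoint wave Neumann}. The crux is then to control the normal derivative: I must show $\gamma_N^i \varphi_\psi \in [\mathcal{H}_{0,}(\Sigma)]'$ together with $\| \gamma_N^i \varphi_\psi \|_{[\mathcal{H}_{0,}(\Sigma)]'} \le C \, \| \psi \|_{[\mathcal{H}_{;0,}(Q)]'}$. Granting this, for $z \in \mathcal{H}_{0,}(\Sigma)$ the pairing is bounded, $|\langle u_z , \psi \rangle_Q| \le \| \gamma_N^i \varphi_\psi \|_{[\mathcal{H}_{0,}(\Sigma)]'} \, \| z \|_{\mathcal{H}_{0,}(\Sigma)}$, and taking the supremum over $\psi$ identifies $u_z$, by reflexivity of the Hilbert space $\mathcal{H}_{;0,}(Q)$, as an element of $\mathcal{H}_{;0,}(Q)$ with the corresponding norm bound.

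The main obstacle is precisely this Neumann trace estimate: unlike the Dirichlet trace (handled for free by Lemma~\ref{lemma:traceH10F}), the normal derivative of an $H^{1,1}$ function is a genuine distribution and must be defined and bounded variationally, exactly as $\gamma_N^i$ was introduced earlier through the form $\Phi$. Concretely, for an extension $\zeta \in \mathcal{H}_{;0,}(Q)$ of $z$ with $\gamma_\Sigma^i \zeta = z$, I would express $\langle \gamma_N^i \varphi_\psi , z \rangle_\Sigma$ through $\Phi(\varphi_\psi,\zeta)$ and the volume term $\langle \psi , \zeta \rangle_Q$, and bound each factor by the two a priori estimates above. The delicate points are bookkeeping of the terminal ($t=T$) boundary contributions produced by Green's formula for the backward-in-time function $\varphi_\psi$, and verifying continuity against the full trace norm $\| \cdot \|_{\mathcal{H}_{0,}(\Sigma)}$ (via the infimum over extensions $\zeta$) rather than merely against the stronger $H^{1/2}_{0,}(\Sigma)$ norm.

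A cleaner alternative that sidesteps the Neumann trace analysis altogether is to invoke the representation formula \eqref{eq:repf}. Let $u \in \mathcal{H}_{;0,}(Q)$ be the solution of the homogeneous ($f \equiv 0$) Dirichlet problem \eqref{eq:IBVP} with datum $\gamma_\Sigma^i u = z$; its spatial normal derivative is $\gamma_N^i u = \OS_i z \in [H^{1/2}_{,0}(\Sigma)]'$ by \eqref{SPO boundedness}. The representation formula then reads $u = \mathscr{S}(\OS_i z) - \mathscr{D}z$ in $Q$, whence $\mathscr{D}z = \mathscr{S}(\OS_i z) - u$. Here $u \in \mathcal{H}_{;0,}(Q)$ by construction and $\mathscr{S}(\OS_i z) \in \mathcal{H}_{;0,}(Q)$ by Proposition~\ref{prop:Scont}, so $\mathscr{D}z \in \mathcal{H}_{;0,}(Q)$ follows immediately. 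In this route the only point needing care is the validity of \eqref{eq:repf} for a general density $z \in \mathcal{H}_{0,}(\Sigma)$, which I would obtain by density of $H^{1/2}_{0,}(\Sigma)$ in $\mathcal{H}_{0,}(\Sigma)$ (Remark~\ref{rem:densityTraces}) together with the continuity of all operators involved.
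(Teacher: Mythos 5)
Your first route is precisely the paper's proof: the authors write $\langle u_z,\psi\rangle_Q$ for $u_z=\mathscr{D}z$, exchange the order of integration, pull $\partial_{n_y}$ outside the $(x,t)$-integral to obtain $\langle u_z,\psi\rangle_Q=\langle z,\gamma_N^i\varphi_\psi\rangle_\Sigma$ with the same adjoint solution $\varphi_\psi$ as in Proposition~\ref{prop:Scont}, and conclude from $\psi\in[{\mathcal{H}}_{;0,}(Q)]'$ that $\varphi_\psi\in H^{1,1}_{;,0}(Q)$ and $\gamma_N^i\varphi_\psi\in[{\mathcal{H}}_{0,}(\Sigma)]'$, hence $u_z\in{\mathcal{H}}_{;0,}(Q)$. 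In fact you are more candid than the paper about the crux: the paper simply asserts the membership $\gamma_N^i\varphi_\psi\in[{\mathcal{H}}_{0,}(\Sigma)]'$ with its implicit norm bound, whereas you correctly flag that this Neumann trace estimate is the delicate step and sketch how to obtain it variationally via the form $\Phi$, in the same spirit in which $\gamma_N^i$ was defined in Section~\ref{sec:BVPs}. Your second route through the representation formula \eqref{eq:repf}, writing $\mathscr{D}z=\mathscr{S}(\OS_i z)-u$ with $u$ the solution of the Dirichlet problem with datum $z$, is genuinely different from what the paper does and would indeed transfer the burden onto Proposition~\ref{prop:Scont} and the solvability theory of Section~\ref{sec:BVPs}; its cost is that it leans on the validity of \eqref{eq:repf} for $u\in{\mathcal{H}}_{;0,}(Q)$, which in the paper is itself obtained by a formal substitution of the fundamental solution into Green's second formula, so the density argument you mention would have to be carried out carefully to make that route self-contained.
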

\begin{proof}
  For $u_z={\mathscr{D}}z $ and a suitable $\psi$ we can write the duality
  pairing as extension of the inner product in $L^2(Q)$ as
\begin{eqnarray*}
  \langle u_z , \psi \rangle_Q
  & = & \int_0^T \int_\Omega u_z(x,t) \, \psi(x,t) \, dx \, dt \\
  & = & \int_0^T \int_\Omega \int_0^t \int_\Gamma \partial_{n_y} G(x-y,t-\tau) \,
        z(y,\tau) \, ds_y \, d\tau \, \psi(x,t) \, dx \, dt \\
  & = & \int_0^T \int_\Gamma z(y,\tau) \; \partial_{n_y}
        \int_\tau^T G(x-y,t-\tau) \, \psi(x,t) \, dx \, dt \, ds_y \, d\tau \\
  & = & \int_0^T \int_\Gamma z(y,\tau) \,
        \partial_{n_y} \varphi_\psi(y,\tau) \, ds_y \, d\tau \\[2mm]
  & = & \langle z , \gamma_N^i \varphi_\psi \rangle_\Sigma,
\end{eqnarray*}
where
\[
  \varphi_\psi(y,\tau) = \int_\tau^T \int_\Gamma
  G(x-y,t-\tau) \, \psi(x,t) \, dx \, dt, \quad (y,\tau) \in Q,
\]
is a solution of the adjoint problem
\eqref{adjoint wave Neumann}. Hence, for
$\psi \in [{\mathcal{H}}_{;0,}(Q)]'$ we obtain
$\varphi_\psi \in H^{1,1}_{;,0}(Q)$, and 
$\gamma_N^i \varphi_\psi \in [{\mathcal{H}}_{0,}(\Sigma)]'$.
From this we conclude
$ u_z \in {\mathcal{H}}_{;0,}(Q)$ when
$z \in {\mathcal{H}}_{0,}(\Sigma)$ is given.
\end{proof}

\noindent
With the previous result we are in a position to consider the lateral trace
of the double layer potential
\begin{equation}\label{Def D deriv}
  \gamma_\Sigma^i {\mathscr{D}} : {\mathcal{H}}_{0,}(\Sigma) \to
  {\mathcal{H}}_{0,}(\Sigma),
\end{equation}
and the so-called hypersingular boundary integral operator as normal derivative
of the double layer potential,
\begin{equation}\label{Def W}
  \OW := - \gamma_N^i {\mathscr{D}} : {\mathcal{H}}_{0,}(\Sigma) \to
  [H^{1/2}_{,0}(\Sigma)]' .
\end{equation}

\subsection{Boundary integral operators and Calder\'on identities}
\label{ssec:Bios}
Without loss of generality, let us consider the complementary domains
\begin{equation*}\label{eq:OmegacQc}
  \Omega^c := B_R\setminus\overline{\Omega} \quad
  \text{ and } \quad Q^c:=\Omega^c \times (0,T),
\end{equation*}
with $B_R := \lbrace \Vx \in \R^n \, : \, \abs{\Vx}<R\rbrace$ is a sufficiently 
large ball containing $\Gamma$.
With this, we define the exterior traces $\gamma_\Sigma^e$ and $\gamma_N^e$ 
following the same ideas from Subsection~\ref{ssec:TracesQ}, but using $Q^c$ 
instead of $Q$.

\begin{remark}
  Clearly, the mappings
  \begin{align*}
    \gamma_\Sigma^e \,
    &: H^{1,1}_{;0,}(Q^c) \to H^{1/2}_{0,}(\Sigma), \qquad \qquad
      \gamma_\Sigma^e \, : H^{1,1}_{;,0}(Q^c) \to H^{1/2}_{,0}(\Sigma), \\
    \gamma_\Sigma^e \,
    &: \calH_{;0,}(Q^c) \to \calH_{0,}(\Sigma), \qquad \:\,\qquad
      \gamma_\Sigma^e \, : \calH_{;,0}(Q^c) \to \calH_{,0}(\Sigma),
  \end{align*}
  are continuous and surjective, while 
  \begin{align*}
    \gamma_N^e \,
    &: H^{1,1}_{;0,}(Q^c)\to [ H^{1/2}_{,0}(\Sigma)]', 
      \qquad \qquad
      \gamma_N^e \, : H^{1,1}_{;,0}(Q^c)\to [ H^{1/2}_{0,}(\Sigma)]',\\
    \gamma_N^e \,
    &: \calH_{;0,}(Q^c) \to [ H^{1/2}_{,0}(\Sigma)]', 
      \qquad \:\qquad
      \gamma_N^e \, : \calH_{;,0}(Q^c) \to [ H^{1/2}_{0,}(\Sigma)]',
  \end{align*}
  are continuous. Moreover, Green's formulae and other properties of
  the interior trace operators $\gamma_\Sigma^i$ and $\gamma_N^i$ 
  also apply to these exterior traces in their corresponding spaces.
  Indeed, following Propositions \ref{prop:Scont} and 
  \ref{prop:Dcont}, we have the continuity of the mappings
  \[
    \mathscr{S} : [ H^{1/2}_{,0}(\Sigma)]' \to
    \calH_{;0,}(Q^c), \qquad
  \mathscr{D} : \calH_{0,}(\Sigma) \quad \to \calH_{;0,}(Q^c). 
  \]
\end{remark}

\noindent
We define the jumps across $\Sigma$ by 
\[
 \left[ \gamma_\Sigma u \right] := \gamma_\Sigma^e u - \gamma_\Sigma^i u,\qquad
 \left[ \gamma_N u \right] := \gamma_N^e u - \gamma_N^i u,
\]
which clearly do not depend on the choice of $B_R$.
Now we can state the following result:

\begin{proposition}
  The following jump relations hold for all
  $w \in [ H^{1/2}_{,0}(\Sigma)]'$ and $z \in \calH_{0,}(\Sigma)$,
  \[
    [ \gamma_\Sigma \mathscr{S} w ] = 0, \qquad  
    [ \gamma_N \mathscr{S} w ] = -w, \qquad
    [ \gamma_\Sigma \mathscr{D} z ] = z, \qquad  
    [ \gamma_N \mathscr{D} z ] = 0.
  \]
\end{proposition}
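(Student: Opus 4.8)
The plan is to fix a large radius $R$ and, for each potential $U$ extended to the full cylinder $B_R\times(0,T)$ by its integral formula, compute the pairing $\langle U,\Box v\rangle_{L^2(B_R\times(0,T))}$ in two different ways for a test function $v$ that is smooth across $\Gamma$. Since the retarded kernel $G(x-y,t-\tau)$ is supported in the forward cone $\{t-\tau\ge|x-y|\}$, both potentials vanish whenever $\mathrm{dist}(x,\Gamma)>T$; choosing $R$ large enough that this holds near $\partial B_R$ makes the jumps independent of $R$ and removes any contribution from the outer boundary. I would take $v\in C^\infty(\overline{B_R}\times[0,T])$ supported away from $\partial B_R$ and vanishing near $t=T$, so that $\gamma_\Sigma^i v=\gamma_\Sigma^e v=:\gamma_\Sigma v$, $\gamma_N^i v=\gamma_N^e v=:\gamma_N v$, and every terminal-time term drops out of the space-time Green's second formula \eqref{Green 2}.

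First I would treat the single layer $U:=\mathscr{S}w$. On one hand, inserting the representation \eqref{eq:SL}, applying Fubini, and using the causal fundamental-solution identity $\int_0^T\int_{B_R}G(x-y,t-\tau)\,\Box v(x,t)\,dx\,dt=v(y,\tau)$ — where the boundary term at $t=0$ is annihilated by causality and the one at $t=T$ by the support of $v$ — gives
\[
  \langle U,\Box v\rangle_{L^2(B_R\times(0,T))}=\int_0^T\int_\Gamma w(y,\tau)\,v(y,\tau)\,ds_y\,d\tau=\langle w,\gamma_\Sigma v\rangle_\Sigma.
\]
On the other hand, $U$ restricts to elements of ${\mathcal{H}}_{;0,}(Q)$ and ${\mathcal{H}}_{;0,}(Q^c)$ (Proposition~\ref{prop:Scont} and the preceding remark) and solves $\Box U=0$ with zero initial data in each, since the source of $\mathscr{S}w$ sits on $\Sigma$; splitting the integral over $Q$ and $Q^c$ and applying \eqref{Green 2} on each piece, with both normal derivatives oriented along the normal $n$ pointing from $\Omega$ into $\Omega^c$ (so that the exterior contribution flips sign), and using the continuity of $v$ across $\Gamma$, yields
\[
  \langle U,\Box v\rangle_{L^2(B_R\times(0,T))}=\langle[\gamma_\Sigma U],\gamma_N v\rangle_\Sigma-\langle[\gamma_N U],\gamma_\Sigma v\rangle_\Sigma.
\]
Equating the two expressions and first choosing $v$ with $\gamma_\Sigma v=0$ forces $\langle[\gamma_\Sigma U],\gamma_N v\rangle_\Sigma=0$; as $\gamma_N v$ runs over a dense set of data this gives $[\gamma_\Sigma\mathscr{S}w]=0$. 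Feeding this back and letting $\gamma_\Sigma v$ vary then gives $\langle w+[\gamma_N U],\gamma_\Sigma v\rangle_\Sigma=0$ for all admissible $v$, hence $[\gamma_N\mathscr{S}w]=-w$.

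The double layer $V:=\mathscr{D}z$ is handled identically; the only change is in the first computation, where differentiating the fundamental-solution identity in $n_y$ before integrating against $z$ produces
\[
  \langle V,\Box v\rangle_{L^2(B_R\times(0,T))}=\int_0^T\int_\Gamma z(y,\tau)\,\partial_{n_y}v(y,\tau)\,ds_y\,d\tau=\langle z,\gamma_N v\rangle_\Sigma.
\]
Comparing with the same broken-Green identity and now selecting first $\gamma_N v=0$ and then varying $\gamma_N v$ yields $[\gamma_N\mathscr{D}z]=0$ and $[\gamma_\Sigma\mathscr{D}z]=z$.

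The hard part is the first computation. Establishing the identity $\int_0^T\int_{B_R}G(x-y,t-\tau)\,\Box v\,dx\,dt=v(y,\tau)$, and its normal derivative, rigorously requires care in dimensions $n=2,3$, where $G$ is only a distribution supported on the forward light cone: one must justify Fubini against the singular or measure-valued kernel and verify that causality exactly cancels the boundary term at $t=0$. The secondary, milder obstacle is the passage from smooth Cauchy data $(\gamma_\Sigma v,\gamma_N v)$ to the full trace spaces, which relies on the density of such data (cf.\ Remark~\ref{rem:densityTraces}) together with the duality definitions of the norms on ${\mathcal{H}}_{0,}(\Sigma)$ and $[H^{1/2}_{,0}(\Sigma)]'$. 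Once these are in place, the sign bookkeeping in the broken Green's formula is routine.
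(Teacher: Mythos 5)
Your plan is correct, but it takes a genuinely different route from the paper. The paper's proof is deliberately minimal: it cites the jump relations for smooth densities from the literature (Costabel--Sayas and Sayas) and then extends them to $w \in [H^{1/2}_{,0}(\Sigma)]'$ and $z \in \calH_{0,}(\Sigma)$ purely by density --- the combined Cauchy trace of smooth functions is dense in $[H^{1/2}_{,0}(\Sigma)]' \times H^{1/2}_{0,}(\Sigma)$, $H^{1/2}_{0,}(\Sigma)$ is dense in $\calH_{0,}(\Sigma)$, and the potentials and traces are continuous on the relevant spaces by Propositions~\ref{prop:Scont} and \ref{prop:Dcont}. You instead reprove the smooth case from scratch by the classical distributional argument: evaluate $\langle U,\Box v\rangle$ once through the kernel, using that $G$ is the causal fundamental solution (so the $t=0$ boundary term dies by causality), and once through the broken Green's identity \eqref{Green 2} on $Q$ and $Q^c$, then decouple the two jumps by varying the Cauchy data of $v$; your sign convention (both one-sided normal derivatives taken along the normal pointing out of $\Omega$, so the exterior identity flips sign) is the right one, and the vanishing of the potentials near $\partial B_R$ by finite propagation speed correctly removes the outer boundary. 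What your approach buys is self-containedness and an explicit view of where causality enters; what the paper's buys is brevity and a clean separation between the known smooth-density theory and the new function-space setting. One adjustment: your ``first computation'' (Fubini against the light-cone kernel) only makes sense for smooth densities, so the density argument you call a ``secondary, milder obstacle'' is in fact the step that carries the result to the stated spaces --- it is exactly the paper's entire proof, and it also underlies your decoupling step, since you need smooth $v$ with $(\gamma_\Sigma v,\gamma_N v)$ ranging over a dense set of pairs, which is the same Lemma~3.5 of Costabel's paper that the authors invoke.
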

\begin{proof}
  The jump relations are known to hold when $w$ and $z$ are smooth, e.g.,
  \cite[Sect.~2.2.1]{CoS17}, and \cite[Sect.~1.3]{Say16}. We extend them to 
  $(w, z) \in [ H^{1/2}_{,0}(\Sigma)]' \times \calH_{0,}(\Sigma)$ by using
  that the combined trace map
  $(\gamma_\Sigma,\gamma_N): u \mapsto (\gamma_\Sigma u,\gamma_N u)$ maps
  $\left.C^\infty_0(\R^n\times\R_+)\right|_{\overline{Q}}$ onto a dense
  subspace of $[ H^{1/2}_{,0}(\Sigma)]' \times H^{1/2}_{0,}(\Sigma)$
  (cf. \cite[Lemma~3.5]{COS88}), and that $H^{1/2}_{0,}(\Sigma)$ is
  dense in $\calH_{0,}(\Sigma)$.
\end{proof}

\noindent
We can now define the  boundary integral operators as follows:

\begin{definition}\label{def:BIOs}
 \begin{align*}
 \OV w &:= \gamma_{\Sigma}^i \mathscr{S} w 
 = \gamma_{\Sigma}^e \mathscr{S} w, \\
\OK z &:= \frac{1}{2}\left( \gamma_{\Sigma}^i \mathscr{D} z
 + \gamma_{\Sigma}^e \mathscr{D} z \right),\\
 \OK' w &:= \frac{1}{2}\left( \gamma_N^i \mathscr{S} w
 + \gamma_N^e \mathscr{S} w \right), \\
  \OW z &:= -\gamma_N^i \mathscr{D} z
  = -\gamma_N^e \mathscr{D} z \, .
\end{align*}
\end{definition}

\noindent
From this definition and \eqref{Def V}, \eqref{Def V deriv},
\eqref{Def D deriv}, \eqref{Def W}, we obtain:

\begin{theorem}\label{thm:BIOsCont}
 The boundary integral operators introduced in Definition~\ref{def:BIOs} are 
 continuous in the following spaces:
 \begin{align*}
\OV &: [ H^{1/2}_{,0}(\Sigma)]' \to \:\,\calH_{0,}(\Sigma),\\
\OK &: \:\,\calH_{0,}(\Sigma)\quad\to \:\,\calH_{0,}(\Sigma),\\
\OK' &: [ H^{1/2}_{,0}(\Sigma)]' \to [ H^{1/2}_{,0}(\Sigma)]', \\
\OW &: \:\,\calH_{0,}(\Sigma)\quad\to [ H^{1/2}_{,0}(\Sigma)]'.
 \end{align*}
\end{theorem}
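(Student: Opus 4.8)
The plan is to recognize each of the four operators as a composition of a layer potential---whose continuity into $\calH_{;0,}(Q)$ (or its exterior analogue on $Q^c$) is already established---with a trace operator, and then to invoke continuity of the relevant traces. Since a composition of bounded linear maps is bounded, this reduces the theorem to a bookkeeping of domains and codomains.

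First I would record the continuity of the trace operators on the $\calH$-spaces. For $\gamma_\Sigma^i$, the norm on $\calH_{0,}(\Sigma)$ is defined as the infimum of $\| V \|_{\calH(Q)}$ over all extensions $V$ with $\gamma_\Sigma^i V = v$, so that $\| \gamma_\Sigma^i V \|_{\calH_{0,}(\Sigma)} \le \| V \|_{\calH(Q)}$ for every $V \in \calH_{;0,}(Q)$; hence $\gamma_\Sigma^i : \calH_{;0,}(Q) \to \calH_{0,}(\Sigma)$ is continuous with constant one. For $\gamma_N^i$, the bound derived in Section~\ref{sec:BVPs} (cf.\ \eqref{eq:lambda f g} with $f \equiv 0$ and $g = \gamma_\Sigma^i u$) yields $\| \gamma_N^i u \|_{[H^{1/2}_{,0}(\Sigma)]'} \le 2 \, \| u \|_{\calH(Q)}$, so $\gamma_N^i : \calH_{;0,}(Q) \to [H^{1/2}_{,0}(\Sigma)]'$ is continuous. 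The exterior versions $\gamma_\Sigma^e$ and $\gamma_N^e$ are continuous by the Remark following the introduction of $Q^c$.

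Then I would treat the four operators in turn. For $\OV = \gamma_\Sigma^i \mathscr{S}$, compose $\mathscr{S} : [H^{1/2}_{,0}(\Sigma)]' \to \calH_{;0,}(Q)$ from Proposition~\ref{prop:Scont} with $\gamma_\Sigma^i : \calH_{;0,}(Q) \to \calH_{0,}(\Sigma)$. For $\OW = -\gamma_N^i \mathscr{D}$, compose $\mathscr{D} : \calH_{0,}(\Sigma) \to \calH_{;0,}(Q)$ from Proposition~\ref{prop:Dcont} with $\gamma_N^i : \calH_{;0,}(Q) \to [H^{1/2}_{,0}(\Sigma)]'$. For the two averaged operators I additionally need the exterior mappings $\mathscr{S} : [H^{1/2}_{,0}(\Sigma)]' \to \calH_{;0,}(Q^c)$ and $\mathscr{D} : \calH_{0,}(\Sigma) \to \calH_{;0,}(Q^c)$ supplied by the same Remark: then $\OK = \tfrac{1}{2}(\gamma_\Sigma^i \mathscr{D} + \gamma_\Sigma^e \mathscr{D})$ maps $\calH_{0,}(\Sigma)$ continuously into itself, and $\OK' = \tfrac{1}{2}(\gamma_N^i \mathscr{S} + \gamma_N^e \mathscr{S})$ maps $[H^{1/2}_{,0}(\Sigma)]'$ continuously into itself, each as a half-sum of two bounded compositions.

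There is no genuine obstacle here: the analytic content lives entirely in Propositions~\ref{prop:Scont} and~\ref{prop:Dcont}, and the present statement is a corollary obtained by chasing the mapping diagram. The only points deserving a word of care are that $\gamma_\Sigma^i \mathscr{S}$ and $\gamma_N^i \mathscr{D}$ are genuinely well defined on the full (completed) space---into $\calH_{0,}(\Sigma)$ rather than only on the dense subspace $H^{1/2}_{0,}(\Sigma)$---which is precisely what Propositions~\ref{prop:Scont} and~\ref{prop:Dcont} guarantee, and that for $\OK$ and $\OK'$ the interior and exterior contributions land in the same target space, so that forming their average is meaningful.
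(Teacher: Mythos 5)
Your proposal is correct and is essentially the paper's own argument: the authors present the theorem as an immediate consequence of the mapping properties \eqref{Def V}, \eqref{Def V deriv}, \eqref{Def D deriv}, \eqref{Def W} (i.e.\ Propositions~\ref{prop:Scont} and~\ref{prop:Dcont} composed with the continuous traces) together with the Remark supplying the exterior potentials and traces on $Q^c$ needed for $\OK$ and $\OK'$. Your write-up merely makes the composition bookkeeping explicit, which matches the paper's route.
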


\noindent
Next, we take traces on the representation formula \eqref{eq:repf} and 
get 
\begin{align*}
  \gamma_{\Sigma}^i u &= (\frac{1}{2} \OI - \OK)\gamma_{\Sigma}^i u  +
                        \OV\gamma_N^i u ,\\
  \gamma_{N}^i u &= \OW \gamma_{\Sigma}^i u + (\frac{1}{2} \OI +
                   \OK') \gamma_N^i u .
\end{align*}
As usual, we can rewrite this as
\begin{align}\label{eq:calderon}
 \left(\begin{array}{c}
  \gamma_{\Sigma}^i u\\ \gamma_{N}^i u
 \end{array}\right) =  \underbrace{\left(\begin{array}{c c }
  (\frac{1}{2} \OI - \OK) & \OV\\ \OW & (\frac{1}{2} \OI + \OK^\prime)
 \end{array}\right)}_{=:\OC_{Q}^i}  \left(\begin{array}{c}
  \gamma_{\Sigma}^i u\\ \gamma_{N}^i u
 \end{array}\right)
\end{align}
with the interior Calderon projection $\OC_Q^i$.

Using standard arguments (see for example \cite[Sect.~1.4]{Say16}), we 
can now prove
\begin{align*}
 \left(\begin{array}{c} z \\ w \end{array}\right) =  
 \OC_Q^i \left(\begin{array}{c} z\\ w \end{array}\right), \quad 
 \forall w \in [ H^{1/2}_{,0}(\Sigma) ]',\, z \in 
\calH_{0,}(\Sigma).
\end{align*}
Furthermore, this gives $(\OC_Q^i)^2 = \OC_Q^i$, from which we get
\[
  \OV \OW = (\frac{1}{2} \OI - \OK)(\frac{1}{2} \OI + \OK), \quad
  \OW \OV = (\frac{1}{2} \OI - \OK^\prime)(\frac{1}{2} \OI + \OK^\prime),
  \quad\OV \OK^\prime = \OK \OV, \quad \OK^\prime \OW = \OW \OK. 
\]

\subsection{Coercivity of boundary integral operators}
In this subsection, we are going to prove coercivity properties of boundary
integral
operators, i.e., of the single layer boundary integral operator $\OV$ and
the hypersingular boundary integral operator $\OW$, which ensure unique
solvability of related boundary integral equations.

\begin{theorem}\label{Thm inf sup V}
  The single layer boundary integral operator
  $\OV : [H^{1/2}_{,0}(\Sigma)]' \to {\mathcal{H}}_{0,}(\Sigma)$
  satisfies the inf-sup stability condition
  \begin{equation}\label{inf sup V}
    c_1^V \, \| w \|_{[H^{1/2}_{,0}(\Sigma)]'} \leq 
    \sup\limits_{0 \neq \mu \in [{\mathcal{H}}_{0,}(\Sigma)]'}
    \frac{|\langle \OV w , \mu \rangle_\Sigma|}
    {\| \mu \|_{[{\mathcal{H}}_{0,}(\Sigma)]'}} \quad
    \mbox{for all} \; w \in [H^{1/2}_{,0}(\Sigma)]' .
  \end{equation}
\end{theorem}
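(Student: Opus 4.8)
The plan is to reduce \eqref{inf sup V} to a lower bound for $\OV$ and then to recover the density $w$ from its image $\OV w$ via the jump relations together with the interior and exterior Steklov--Poincar\'e operators. Since $\calH_{0,}(\Sigma)$ is a Hilbert space, hence reflexive, the supremum on the right-hand side of \eqref{inf sup V} equals the norm $\| \OV w \|_{\calH_{0,}(\Sigma)}$. Thus \eqref{inf sup V} is equivalent to the bound
\begin{equation*}
  c_1^V \, \| w \|_{[H^{1/2}_{,0}(\Sigma)]'} \leq \| \OV w \|_{\calH_{0,}(\Sigma)}
  \qquad \mbox{for all} \; w \in [H^{1/2}_{,0}(\Sigma)]' ,
\end{equation*}
i.e.\ to showing that $\OV$ is bounded below.

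Given $w \in [H^{1/2}_{,0}(\Sigma)]'$, I would consider the single layer potential $u_w = \mathscr{S}w$. By Proposition~\ref{prop:Scont} and the analogous exterior mapping property stated in Subsection~\ref{ssec:Bios}, $u_w \in \calH_{;0,}(Q)$ and $u_w \in \calH_{;0,}(Q^c)$; being a causal retarded potential, $u_w$ solves the homogeneous wave equation with vanishing initial data on both sides. Set $g := \OV w = \gamma_\Sigma^i u_w = \gamma_\Sigma^e u_w \in \calH_{0,}(\Sigma)$, where the two traces agree by the jump relation $[\gamma_\Sigma \mathscr{S}w] = 0$. Since $u_w$ is a homogeneous solution with zero initial data on each side, its interior and exterior normal derivatives are exactly the images of $g$ under the interior and exterior Dirichlet-to-Neumann (Steklov--Poincar\'e) maps, $\gamma_N^i u_w = \OS_i g$ and $\gamma_N^e u_w = \OS_e g$.

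The decisive step is the jump relation $[\gamma_N \mathscr{S}w] = \gamma_N^e u_w - \gamma_N^i u_w = -w$, which gives the representation $w = \gamma_N^i u_w - \gamma_N^e u_w$. Both terms are controlled by the Dirichlet datum $g$: the interior one by the continuity \eqref{SPO boundedness} of $\OS_i$, and the exterior one by the corresponding exterior estimate with a constant $c_2^{S_e}$, obtained by applying the results of Section~\ref{sec:BVPs} on $Q^c$. Consequently
\begin{equation*}
  \| w \|_{[H^{1/2}_{,0}(\Sigma)]'} \leq \big( c_2^{S_i} + c_2^{S_e} \big) \,
  \| g \|_{\calH_{0,}(\Sigma)} = \big( c_2^{S_i} + c_2^{S_e} \big) \,
  \| \OV w \|_{\calH_{0,}(\Sigma)} ,
\end{equation*}
so that \eqref{inf sup V} follows with $c_1^V = \big( c_2^{S_i} + c_2^{S_e} \big)^{-1}$.

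I expect the main obstacle to be the rigorous construction of the exterior Steklov--Poincar\'e operator $\OS_e$ on the bounded annulus $\Omega^c = B_R \setminus \overline{\Omega}$, whose lateral boundary also contains $\partial B_R \times (0,T)$. The crucial feature here, specific to the wave equation, is finite speed of propagation: choosing $R$ with $\mathrm{dist}(\partial B_R, \Gamma) > T$, the retarded potential $\mathscr{S}w$ and its normal derivative vanish near $\partial B_R \times (0,T)$ for every $t \in (0,T)$, so the exterior problem carries homogeneous data on the artificial boundary and $\OS_e : \calH_{0,}(\Sigma) \to [H^{1/2}_{,0}(\Sigma)]'$ is well defined on data supported on $\Gamma$, independently of $R$. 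Establishing this causality/support property and checking that the existence, uniqueness and boundedness results of Section~\ref{sec:BVPs} transfer verbatim to $Q^c$ despite the indefiniteness of the wave bilinear form is where the real work lies; the identity $w = \gamma_N^i u_w - \gamma_N^e u_w$ and the resulting estimate are then routine.
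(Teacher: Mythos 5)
Your proof follows essentially the same route as the paper: write $w = \gamma_N^i \mathscr{S}w - \gamma_N^e \mathscr{S}w$ via the jump relation and bound each Neumann trace through the interior/exterior Steklov--Poincar\'e maps applied to the common Dirichlet trace $g = \OV w$, whose $\calH_{0,}(\Sigma)$-norm is exactly the supremum in \eqref{inf sup V}. The paper packages the interior estimate as the inf-sup condition \eqref{inf-sup inverse SPO} for $\OS_i^{-1}$, but since that condition is itself derived from the continuity bound \eqref{SPO boundedness} and duality, the two arguments coincide; your additional attention to the exterior annulus and finite speed of propagation only makes explicit a point the paper leaves implicit.
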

\begin{proof}
For $w \in [H^{1/2}_{,0}(\Sigma)]'$ we consider the single layer potential
$ u = {\mathscr{S}}w$ which defines a solution $u \in {\mathcal{H}}_{0,}(Q)$
of the homogeneous wave equation. When taking the lateral trace of $u$ this
gives $g = \gamma_\Sigma^i u = \OV w \in {\mathcal{H}}_{0,}(\Sigma)$.
In fact, $u$ is the unique solution of the Dirichlet boundary value
problem
\[
  \Box u = 0 \quad \mbox{in} \; Q, \quad
  \gamma_\Sigma^i u = g \quad \mbox{on} \; \Sigma, \quad
  u = \partial_t u = 0 \quad \mbox{on} \; \Sigma_0 .
\]
When using the interior Steklov--Poincar\'e operator $\OS_i$ we can
determine the related interior Neumann trace
\[
\lambda_i = \gamma_N^i u = \OS_i g \in [H^{1/2}_{,0}(\Sigma)]' .
\]
Since the Steklov--Poincar\'e operator $\OS_i$ is invertible, this gives
$g = \OS_i^{-1} \lambda_i$, i.e., $ g = \gamma_\Sigma^i u$ is the lateral trace
of the solution of the Neumann boundary value problem
\[
  \Box u = 0 \quad \mbox{in} \; Q, \quad
  \gamma_N^i u = \lambda_i \quad \mbox{on} \; \Sigma, \quad
  u = \partial_t u = 0 \quad \mbox{on} \; \Sigma_0 .
\]
From the inf-sup stability condition \eqref{inf-sup inverse SPO}
of the inverse interior Steklov-Poincar\'e operator $\OS_i^{-1}$
we now conclude
\[
  \frac{1}{c_2^{S_i}} \, \| \lambda_i \|_{[H^{1/2}_{,0}(\Sigma)]'} \leq
  \sup\limits_{0 \neq \mu \in [{\mathcal{H}}_{0,}(\Sigma)]'}
  \frac{|\langle \OS_i^{-1} \lambda_i , \mu \rangle_\Sigma|}
  {\| \mu \|_{[{\mathcal{H}}_{0,}(\Sigma)]'}} =
  \sup\limits_{0 \neq \mu \in [{\mathcal{H}}_{0,}(\Sigma)]'}
  \frac{|\langle \OV w , \mu \rangle_\Sigma|}
  {\| \mu \|_{[{\mathcal{H}}_{0,}(\Sigma)]'}}
\]
For the exterior problem we can derive a related estimate, i.e.,
\[
  \frac{1}{c_2^{S_e}} \, \| \lambda_e \|_{[H^{1/2}_{,0}(\Sigma)]'} \leq
  \sup\limits_{0 \neq \mu \in [{\mathcal{H}}_{0,}(\Sigma)]'}
  \frac{|\langle \OV w , \mu \rangle_\Sigma|}
  {\| \mu \|_{[{\mathcal{H}}_{0,}(\Sigma)]'}} \, ,
\]
where $\lambda_e$ is the exterior Neumann trace of the single layer
potential $u = {\mathscr{S}}w$. Now, and using the jump relation of the
adjoint double layer potential, this gives
\begin{eqnarray*}
  \| w \|_{[H^{1/2}_{,0}(\Sigma)]'}
  & = & \| \lambda_i - \lambda_e \|_{[H^{1/2}_{,0}(\Sigma)]'} \\
  & \leq &
  \| \lambda_i \|_{[H^{1/2}_{,0}(\Sigma)]'} +
  \| \lambda_e \|_{[H^{1/2}_{,0}(\Sigma)]'}  \leq (c_2^{S_i} + c_2^{S_e})
  \sup\limits_{0 \neq \mu \in [{\mathcal{H}}_{0,}(\Sigma)]'}
  \frac{|\langle \OV w , \mu \rangle_\Sigma|}
  {\| \mu \|_{[{\mathcal{H}}_{0,}(\Sigma)]'}} \, ,
\end{eqnarray*}
which implies the desired inf-sup condition.
\end{proof}

\noindent
While the inf-sup stability condition \eqref{inf sup V} ensures uniqueness of
a solution of a related boundary integral equation, the following
result will provide solvability.

\begin{lemma}\label{Lem >0 V}
  For any $ 0 \neq \mu \in [{\mathcal{H}}_{0,}(\Sigma)]'$ there exists a
  $w_\mu \in [H^{1/2}_{,0}(\Sigma)]'$ such that
  \[
    \langle \OV w_\mu , \mu \rangle_\Sigma > 0 
  \]
  is satisfied.
\end{lemma}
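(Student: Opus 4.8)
The plan is to reduce the claim to the \emph{surjectivity} of the single layer boundary integral operator $\OV$ onto $\calH_{0,}(\Sigma)$: once $\OV$ is known to be onto, the required density $w_\mu$ is obtained by solving a single layer equation whose right-hand side pairs positively with $\mu$. Concretely, since $0\neq\mu\in[\calH_{0,}(\Sigma)]'$, the nondegeneracy of the duality pairing yields some $g_0\in\calH_{0,}(\Sigma)$ with $\langle g_0,\mu\rangle_\Sigma\neq 0$, and after replacing $g_0$ by $-g_0$ if necessary we may assume $\langle g_0,\mu\rangle_\Sigma>0$. It then suffices to exhibit $w_\mu\in[H^{1/2}_{,0}(\Sigma)]'$ with $\OV w_\mu=g_0$, because then $\langle \OV w_\mu,\mu\rangle_\Sigma=\langle g_0,\mu\rangle_\Sigma>0$.

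To produce such a density I would establish that $\OV$ maps onto $\calH_{0,}(\Sigma)$, by a two-sided extension argument that parallels the treatment of the Steklov--Poincar\'e operators in Section~\ref{sec:BVPs}. Given $g_0\in\calH_{0,}(\Sigma)$, let $u^i\in\calH_{;0,}(Q)$ solve the interior Dirichlet problem with datum $g_0$, and let $u^e\in\calH_{;0,}(Q^c)$ solve the exterior Dirichlet problem with the same datum; both exist and are unique by the well-posedness of Section~\ref{sec:BVPs} together with its exterior counterpart recorded in the Remark following the definition of $\gamma_\Sigma^e,\gamma_N^e$. Gluing $u^i$ and $u^e$ into a single function $U$ on $\R^n\times(0,T)$ produces a two-sided wave solution whose Dirichlet trace is continuous across $\Sigma$, namely $\gamma_\Sigma^i U=\gamma_\Sigma^e U=g_0$, so that $[\gamma_\Sigma U]=0$, while its Neumann jump $w_\mu:=\gamma_N^i U-\gamma_N^e U\in[H^{1/2}_{,0}(\Sigma)]'$ can be written through the interior and exterior Steklov--Poincar\'e operators as $(\OS_i+\OS_e)g_0$.

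Applying the representation formula \eqref{eq:repf} in $Q$ and its analogue in $Q^c$, and adding the two identities, the double layer contributions cancel precisely because the Dirichlet trace is continuous, leaving $U=\mathscr{S}w_\mu$. Taking the interior lateral trace and invoking the jump relation $[\gamma_\Sigma\mathscr{S}w_\mu]=0$ yields $\OV w_\mu=\gamma_\Sigma^i\mathscr{S}w_\mu=\gamma_\Sigma^i U=g_0$, which completes the reduction and delivers the asserted positivity. Combined with the injectivity of $\OV$ already contained in the inf-sup condition \eqref{inf sup V}, this in fact shows that $\OV$ is bijective with $\OV^{-1}=\OS_i+\OS_e$.

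The main obstacle is the justification of the identity $U=\mathscr{S}w_\mu$ in the low-regularity energy setting. The representation formula \eqref{eq:repf} was derived for interior solutions, so I would have to set up its exterior version on $Q^c$ with the correctly oriented normal, verify that the double layer densities $\gamma_\Sigma^i U$ and $\gamma_\Sigma^e U$ genuinely cancel as elements of $\calH_{0,}(\Sigma)$ rather than only for smooth data, and confirm that the Neumann jump $w_\mu$ indeed lies in $[H^{1/2}_{,0}(\Sigma)]'$ with $\mathscr{S}w_\mu\in\calH_{;0,}(Q)$, which is exactly Proposition~\ref{prop:Scont}. These steps rely on the mapping properties of $\gamma_N^e$ from the Remark and on the density of $H^{1/2}_{0,}(\Sigma)$ in $\calH_{0,}(\Sigma)$ to pass from smooth to general data; once they are in place, the positivity statement follows at once.
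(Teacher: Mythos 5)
Your strategy is genuinely different from the paper's. The paper does not go through surjectivity of $\OV$ at all: it constructs $w_\mu$ explicitly. Given $\mu$, it forms the time-reversed (adjoint) single layer potential $u_\mu(y,\tau)=\int_\tau^T\int_\Gamma G(x-y,t-\tau)\,\mu(x,t)\,ds_x\,dt$, observes by Fubini that $\langle \OV w,\mu\rangle_\Sigma=\langle w,\gamma_\Sigma^i u_\mu\rangle_\Sigma$ for every $w$, takes the time antiderivative $U_\mu(x,t)=\int_0^t u_\mu(x,s)\,ds$ with lateral trace $g_\mu$, solves the interior Dirichlet problem with datum $g_\mu$ to obtain $v_\mu$, and sets $w_\mu:=\gamma_N^i v_\mu$. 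Since $\gamma_\Sigma^i u_\mu=\partial_t g_\mu$ coincides with the lateral trace of $\partial_t v_\mu$, the pairing becomes the energy
$\int_0^T\int_\Gamma \partial_{n_x} v_\mu\,\partial_t v_\mu\,ds_x\,dt
=\tfrac12\,\| \partial_t v_\mu(T)\|^2_{L^2(\Omega)}+\tfrac12\,\|\nabla_x v_\mu(T)\|^2_{L^2(\Omega)}$,
which is nonnegative and vanishes only if $v_\mu\equiv 0$, hence only if $\mu=0$. This uses nothing beyond the interior Dirichlet theory of Section~\ref{sec:BVPs} and an elementary energy identity.

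Your reduction is sound in outline (proving surjectivity of $\OV$ by gluing interior and exterior Dirichlet solutions is the classical direct method, and it is not circular, since $\OS_i$ and $\OS_e$ are defined through boundary value problems rather than through $\OV$), but it leaves a genuine gap exactly where you flag it, and that gap is not minor bookkeeping. The identity $U=\mathscr{S}w_\mu$ requires an exterior representation formula on $Q^c=(B_R\setminus\overline{\Omega})\times(0,T)$, which the paper never establishes; it would carry additional layer potentials over the artificial boundary $\partial B_R\times(0,T)$ that must be shown to vanish by a finite-propagation-speed argument, and the exterior Dirichlet problem itself needs a boundary condition on $\partial B_R$ before it is even well posed. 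Moreover, even the interior formula \eqref{eq:repf} is obtained in the paper by formally inserting the (non-$H^2$) fundamental solution into Green's second formula, so extending it to two-sided solutions of only $\calH$-regularity is precisely the hard analytic content your argument presupposes. Note also that the lemma only asserts density of the range of $\OV$ (triviality of its annihilator), which is strictly weaker than the surjectivity you set out to prove; the paper's energy construction achieves exactly this minimal goal while avoiding all of the above machinery. As written, your proposal is an attractive but unfinished alternative: to complete it you would need to supply the exterior well-posedness and the two-sided representation formula in the energy trace spaces, neither of which is available at this point in the development.
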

\begin{proof}
  For given $0 \neq \mu \in [{\mathcal{H}}_{0,}(\Sigma)]'$ we define the
  adjoint single layer potential $u_\mu \in H^{1,1}_{;,0}(Q)$ by
  \[
    u_\mu(y,\tau) = \int_\tau^T \int_\Gamma G(x-y,t-\tau) \,
    \mu(x,t) \, ds_x \, dt \quad \mbox{for} \; (y,\tau) \in Q .
  \]
  For the lateral trace $\gamma_\Sigma^i u_\mu \in H^{1/2}_{,0}(\Sigma)$ and
  arbitrary $w \in [H^{1/2}_{,0}(\Sigma)]'$ we then have
  \begin{eqnarray*}
    \langle w , \gamma_\Sigma^i u_\mu \rangle_\Sigma
    & = & \int_0^T \int_\Gamma w(y,\tau)
          \int_\tau^T \int_\Gamma
          G(x-y,t-\tau) \, \mu(x,t) \, ds_x \, dt \, ds_y \, d\tau \\
    & = & \int_0^T \int_\Gamma \int_0^t \int_\Gamma
          G(x-y,t-\tau) \, w(y,\tau) \, ds_y \, d\tau \, \mu(x,t) \, ds_x \, dt
          \, = \, \langle \OV w , \mu \rangle_\Sigma \, .
  \end{eqnarray*}
  Moreover, we compute
  \[
    U_\mu(x,t) := \int_0^t u_\mu(x,s) \, ds \quad \mbox{for} \;
    (x,t) \in Q ,
  \]
  with the lateral trace
  $ g_\mu := \gamma_\Sigma^i U_\mu \in H^{1/2}_{0,}(\Sigma) \subset
  {\mathcal{H}}_{0,}(\Sigma)$. Hence, there exists a unique solution
  $v_\mu \in {\mathcal{H}}_{;0,}(Q)$ of the Dirichlet problem for
  the wave equation,
  \[
    \Box v_\mu = 0 \quad \mbox{in} \; Q, \quad
    v_\mu = g_\mu \quad \mbox{on} \; \Sigma, \quad
    v_\mu = \partial_t v_\mu = 0 \quad \mbox{on} \; \Sigma_0 .
  \]
  We then conclude
  \begin{eqnarray*}
    && \int_0^t \int_\Gamma \frac{\partial}{\partial n_x} v_\mu(x,s) \,
       \partial_s v_\mu(x,s) \, ds_x \, ds \\
    && \hspace*{1cm}
       = \int_0^t \int_\Omega \Big[ \partial_{ss} v_\mu(x,s) \,
       \partial_s v_\mu(x,s) + \nabla_x v_\mu(x,s) \cdot \nabla_x
       \partial_s v_\mu(x,s) \Big] \, dx \, ds \\
    && \hspace*{1cm} = \frac{1}{2} \int_0^t \frac{d}{ds} \int_\Omega
       \Big[ [\partial_s v_\mu(x,s)]^2
       + [\nabla_x v_\mu(x,s)]^2 \Big] \, dx \, ds       \\
    && \hspace*{1cm} = \frac{1}{2} \, \| \partial_t v_\mu(t) \|^2_{L^2(\Omega)}
       + \frac{1}{2} \, \| \nabla_x v_\mu(t) \|^2_{L^2(\Omega)} \geq 0
       \quad \mbox{for all} \; t \in (0,T].
  \end{eqnarray*}
  In the case
  \[
    \frac{1}{2} \, \| \partial_t v_\mu(t) \|^2_{L^2(\Omega)}
    + \frac{1}{2} \, \| \nabla_x v_\mu(t) \|^2_{L^2(\Omega)} = 0
       \quad \mbox{for all} \; t \in (0,T],
   \]
   and together with the zero initial conditions, we would conclude
   $v_\mu \equiv 0$ in $Q$, which then implies $g_\mu \equiv 0$ on
   $ \Sigma$, and thus
   $u_\mu \equiv 0$. But this contradicts $\mu \not \equiv 0$. Therefore
   we have
   \[
     \int_0^T \int_\Gamma \frac{\partial}{\partial n_x} v_\mu(x,t) \,
     \partial_t v_\mu(x,t) \, ds_x \, dt =
      \frac{1}{2} \, \| \partial_t v_\mu(T) \|^2_{L^2(\Omega)}
    + \frac{1}{2} \, \| \nabla_x v_\mu(T) \|^2_{L^2(\Omega)} > 0,
  \]
  and with
  \[
    \partial_t U_\mu = u_\mu \quad \mbox{in} \; Q, \quad
    \partial_t v_\mu = \partial_t g_\mu \quad \mbox{on} \; \Sigma, \quad
    g_\mu = \gamma_\Sigma^i U_\mu, \quad
    w_\mu := \gamma_N^i v_\mu \in [H^{1/2}_{,0}(\Sigma)]'
  \]
  we finally conclude
  \[
    \langle \OV w_\mu , \mu \rangle_\Sigma =
    \langle w_\mu , \gamma_\Sigma^i u_\mu \rangle_\Sigma =
    \int_0^T \int_\Gamma \frac{\partial}{\partial n_x} v_\mu(x,t) \,
     \partial_t v_\mu(x,t) \, ds_x dt > 0 .
  \]
\end{proof}

\noindent
The solution of the Dirichlet boundary value problem
\[
  \Box u = 0 \quad \mbox{in} \; Q, \quad
  u = g \quad \mbox{on} \; \Sigma, \quad
  u = \partial_t u = 0 \quad \mbox{on} \; \Sigma_0
\]
is given by the representation formula
\[
  u(x,t) = ({\mathscr{S}}\gamma_N^i u)(x,t) -
  ({\mathscr{D}}g)(x,t) \quad \mbox{for} \; (x,t) \in Q,
\]
where we can determine the yet unknown Neumann datum
$ w = \gamma_N^i u \in [H^{1/2}_{,0}(\Sigma)]'$ as the unique solution
of the first kind boundary integral equation
\begin{equation}\label{BIE V}
\OV w = (\frac{1}{2}\OI+\OK)g \quad \mbox{on} \; \Sigma ,
\end{equation}
i.e., of the variational formulation
\begin{equation}\label{BIE V VF}
  \langle \OV w , \mu \rangle_\Sigma =
  \langle (\frac{1}{2}\OI+\OK)g,\mu \rangle_\Sigma \quad
  \mbox{for all} \; \mu \in [{\mathcal{H}}_{0,}(\Sigma)]' .
\end{equation}
Solvability of the variational formulation \eqref{BIE V VF} follows from
Lemma \ref{Lem >0 V}, while uniqueness of the solution is a consequence
of Theorem \ref{Thm inf sup V}. Instead of the variational formulation
\eqref{BIE V VF}, we may use the modified Hilbert transformation
${\mathcal{H}}_T$ as defined in subsection~\ref{ssec:HilbertTh}
to end up with an equivalent variational problem to find
$w \in [H^{1/2}_{,0}(\Sigma)]'$ such that
\begin{equation}\label{BIE V H}
  \langle \calH_T \OV w , \mu \rangle_\Sigma =
  \langle \calH_T (\frac{1}{2}\OI+\OK)g, \mu \rangle_\Sigma \quad
  \mbox{for all} \; \mu \in [{\mathcal{H}}_{,0}(\Sigma)]' .
\end{equation}
Due to the inclusion $H^{1/2}_{,0}(\Sigma) \subset {\mathcal{H}}_{,0}(\Sigma)$,
we obviously have
$[{\mathcal{H}}_{,0}(\Sigma)]' \subset [H^{1/2}_{,0}(\Sigma)]'$ which will
allow for a Galerkin--Bubnov space-time boundary element discretization
of \eqref{BIE V H}.

\begin{remark}
For a solution $u$ of the homogeneous wave equation with
zero initial data but inhomogeneous Dirichlet boundary conditions and
a suitable test function $v$ we can write Green's first formula as
\[
  \int_0^T \int_\Omega \partial_{n_x} u \, v \, dx \, dt
  = \int_0^T \int_\Omega \Big[ \partial_{tt} u \, v + \nabla_x u \cdot
  \nabla_x v \Big] \, dx \, dt \, .
\]
In particular, for $v=\partial_t u$, this results in the energy representation
\begin{align*}
  E(u) :&= \int_0^T \int_\Omega \Big[ \partial_{tt} u \, \partial_t u +
  \nabla_x u \cdot \nabla_x \partial_t u \Big] \, dx \, dt \\
  &= \frac{1}{2} \, \| \partial_t u(T) \|^2_{L^2(\Omega)} +
  \frac{1}{2} \, \| \nabla_x u(T) \|^2_{L^2(\Omega)} > 0 \, .\nonumber 
\end{align*}
Note that this representation is the basis of the energetic BEM,
see, e.g., \cite{ADG08}.
Instead, when using the particular test function
$ v = {\mathcal{H}}_T u $ and Proposition \ref{proposition Hilbert}
this gives
  \[
    \int_0^T \int_\Gamma \frac{\partial}{\partial n_x} u \,
    {\mathcal{H}}_T u \, ds_x dt =
    \int_0^T \int_\Omega \Big[ {\mathcal{H}}_T \partial_t u \, \partial_t u
    + \nabla_x u \cdot {\mathcal{H}}_T \nabla_x u  \Big] \, dx \, dt \geq 0 \, .
  \]
  Specifically, for the single layer potential $ u = {\mathscr{S}} w$ in
  ${\mathbb{R}}^{n+1} \backslash \Sigma$ we then conclude
  \[
    \langle w , {\mathcal{H}}_T \OV w \rangle_\Sigma =
    \int_0^T \int_\Omega \Big[ {\mathcal{H}}_T \partial_t u \, \partial_t u
    + \nabla_x u \cdot {\mathcal{H}}_T \nabla_x u  \Big] \, dx \, dt \geq 0 \, .
  \]
  In fact, when considering the spatially one-dimensional case $n=1$ we can
  prove the following ellipticity estimate
  \cite{SUZ21,SUT20}
  \[
    \langle w , {\mathcal{H}}_T \OV w \rangle_\Sigma \geq c_1^V \,
    \| w \|^2_{[H^{1/2}_{,0}(\Sigma)]'} \quad
    \mbox{for all} \; w \in [H^{1/2}_{,0}(\Sigma)]' .
  \]
\end{remark}

\noindent
Since the single layer boundary integral operator
$\OV : [H^{1/2}_{,0}(\Sigma)]' \to {\mathcal{H}}_{0,}(\Sigma)$ is invertible,
we can write the solution of the boundary integral equation
\eqref{BIE V} as
\[
  w = \gamma_N^i u = \OV^{-1} (\frac{1}{2}\OI+\OK) g = \OS_i g,
\]
representing the Dirichlet to Neumann map with the interior
Steklov--Poincar\'e operator
\[
  \OS_i = \OV^{-1} (\frac{1}{2}\OI+\OK) :
  {\mathcal{H}}_{0,}(\Sigma) \to [H^{1/2}_{,0}(\Sigma)]' \, .
\]
Hence we find that
\[
  \OV \OS_i = \frac{1}{2} \OI + \OK : {\mathcal{H}}_{0,}(\Sigma) \to
  {\mathcal{H}}_{0,}(\Sigma)
\]
is invertible. As we can formulate a related boundary integral
equation also for the exterior Dirichlet boundary value problem,
\[
\OV \gamma_N^e u = (- \frac{1}{2}\OI+\OK) g \quad \mbox{on} \; \Sigma,
\]
this gives that the exterior Steklov--Poincar\'e operator
\[
  \OS_e = - \OV^{-1} (\frac{1}{2}\OI-\OK) :
  {\mathcal{H}}_{0,}(\Sigma) \to [H^{1/2}_{,0}(\Sigma)]'
\]
is invertible, and so is
\[
  \frac{1}{2} \OI - \OK = - \OV \OS_e :
  {\mathcal{H}}_{0,}(\Sigma) \to
  {\mathcal{H}}_{0,}(\Sigma) \, .
\]
Consequently
\[
  \OV \OW = (\frac{1}{2}\OI - \OK) (\frac{1}{2}\OI+\OK) :
  {\mathcal{H}}_{0,}(\Sigma) \to {\mathcal{H}}_{0,}(\Sigma),
\]
and thus
\[
  \OW = \OV^{-1} (\frac{1}{2}\OI - \OK) (\frac{1}{2}\OI+\OK) :
  {\mathcal{H}}_{0,}(\Sigma) \to [H^{1/2}_{,0}(\Sigma)]' \, .
\]
This finally implies that the hypersingular boundary integral operator
$\OW : {\mathcal{H}}_{0,}(\Sigma) \to [H^{1/2}_{,0}(\Sigma)]'$
satisfies the inf-sup stability condition
\begin{equation}\label{inf-sup W}
    c_1^{\OW} \, \| v \|_{{\mathcal{H}}_{0,}(\Sigma)} \leq
    \sup\limits_{0 \neq \eta \in H^{1/2}_{,0}(\Sigma)}
    \frac{|\langle \OW v , \eta \rangle_\Sigma|}
    {\| \eta \|_{H^{1/2}_{,0}(\Sigma)}} \quad \mbox{for all} \;
    v \in {\mathcal{H}}_{0,}(\Sigma).
\end{equation}
The solution of the Neumann boundary value problem
\[
  \Box u = 0 \quad \mbox{in} \; Q, \quad
  \partial_{n_x} u = \lambda \quad \mbox{on} \; \Sigma, \quad
  u=\partial_tu=0 \quad \mbox{on} \; \Sigma_0
\]
is given by the representation formula
\[
  u(x,t) = (\mathscr{S}\lambda)(x,t) - (\mathscr{D}z)(x,t) \quad
  \mbox{for} \; (x,t) \in Q,
\]
where we can determine the yet unknown Dirichlet datum
$z = \gamma_\Sigma^i u \in {\mathcal{H}}_{0,}(\Sigma)$ as the unique
solution of the first kind boundary integral equation
\[
\OW z = (\frac{1}{2}\OI-\OK) \lambda \quad \mbox{on} \; \Sigma .
\]
Unique solvability follows as described above.

\section{Conclusions}
In this paper, we presented a new framework to describe the
mapping properties of boundary integral operators for the wave equation.
The results are similar as known for the boundary integral operators
for elliptic partial differential equations, i.e., providing ellipticity
and boundedness with respect to function spaces of the same Sobolev spaces.
This will be the starting point to derive quasi-optimal error estimates
for related boundary element methods which are not available so far, and
which will be reported in forthcoming work. Other topics of interest
include efficient implementations of the proposed scheme using
the modified Hilbert transformation, a posteriori error estimates and
adaptivity, an efficient solution of the resulting linear systems of algebraic
equations, and the coupling with space-time finite element methods.

\bibliographystyle{plain}
\bibliography{references}

\end{document}